\newtheorem{introthm}{Theorem}
\newtheorem{theorem}{Theorem}[section]
\newtheorem{lemma}[theorem]{Lemma}
\newtheorem{proposition}[theorem]{Proposition}
\newtheorem{corollary}[theorem]{Corollary}
\theoremstyle{definition}
\newtheorem{definition}[theorem]{Definition}
\newtheorem{example}[theorem]{Example}
\newtheorem{remark}[theorem]{Remark}
\def\cc{{\mathbb C}}
\def\kk{{\mathbb K}}
\def\zz{{\mathbb Z}}
\def\rr{{\mathbb R}}
\def\qq{{\mathbb Q}}
\def\pp{{\mathbb P}}
\def\Osh{{\mathcal O}}
\def\Aut{\operatorname{Aut}} 
\def\Pic{\operatorname{Pic}} 
\def\Nef{\operatorname{Nef}} 
\def\Alb{\operatorname{Alb}} 
\def\alb{\alpha}
\begin{document}

\title{About the semiample cone of the symmetric product of a curve}

\author{Michela Artebani}
\address{
Departamento de Matem\'atica, \newline
Universidad de Concepci\'on, \newline
Casilla 160-C,
Concepci\'on, Chile}
\email{martebani@udec.cl}

\author{Antonio Laface}
\address{
Departamento de Matem\'atica, \newline
Universidad de Concepci\'on, \newline
Casilla 160-C,
Concepci\'on, Chile}
\email{alaface@udec.cl}

\author{Gian Pietro Pirola}
\address{
Dipartimento di Matematica, \newline
Universit\`a di Pavia, \newline
via Ferrata 1,
27100 Pavia, Italia}
\email{gianpietro.pirola@unipv.it}

\subjclass[2010]{14C20, 14D07}
\keywords{
Symmetric product, 
Kouvidakis conjecture} 
\thanks{
The first author was partially supported 
by Proyecto FONDECYT Regular N. 1130572.
The second author was partially supported 
by Proyecto FONDECYT Regular N. 1150732.
The third author is partially supported by Indam GNSAGA 
and by Prin 2012 (Project ``Moduli di strutture geometriche e loro applicazioni'').
}

\numberwithin{equation}{section}

\begin{abstract}
Let $C$ be a smooth  curve
which is complete intersection of a
quadric and a degree $k>2$ surface 
in $\pp^3$ and let $C^{(2)}$ be its second
symmetric power. In this paper we study 
the finite generation of the extended 
canonical ring $R(\Delta,K) :=
\bigoplus_{(a,b)\in\zz^2}H^0(C^{(2)},a\Delta+bK)$,
where $\Delta$ is the image of the
diagonal and $K$ is the canonical divisor.
We first show that $R(\Delta,K)$ is finitely
generated if and only if the difference of
the two $g_k^1$ on $C$ is torsion non-trivial and
then show that this holds on an analytically 
dense locus of the moduli space of such curves.
\end{abstract}
\maketitle

\section*{Introduction}
Let $C$ be a smooth complex curve of genus $g>1$,
denote by $C^2$ the cartesian product of
$C$ with itself, by $C^{(2)}$ the 
second symmetric product of $C$  
and by $\pi\colon C^2\to C^{(2)}$ the
double cover $(p,q)\mapsto p+q$.
Let $\Delta$ be the image
of the diagonal via $\pi$ and  $K$ be
a canonical divisor of $C^{(2)}$.
In this paper we are interested
in the finite generation of the {\em extended
canonical ring}:
\[
 R(\Delta,K)
 \, :=\,
 \bigoplus_{(a,b)\in\zz^2}H^0(C^{(2)},a\Delta+bK).
\]
It is not difficult to show that finite generation
is equivalent to ask that the two-dimensional 
cone $\Nef(\Delta,K)$ consisting of classes 
of nef divisors within the rational vector space 
spanned by the classes of $\Delta$ and $K$ 
must be generated by two semiample classes
(see proof of Theorem~\ref{teo-1}).
The Kouvidakis conjecture~\cite{CK} states that,
for the very general curve, if $\Nef(\Delta,K)$ is 
closed then $g$ must be a square, 
i.e. $g = (k-1)^2$ for $k>1$ integer. 
We thus focus our attention to
the case of square genus and assume $C$
to be a complete intersection of a 
quadric $Q$ with a degree $k>2$ hypersurface.
We denote by $\eta_C$ the class of the
difference of the two $g_k^1$ of $C$
induced by the two rulings of $Q$, which
is trivial if $Q$ is a cone.
Our first result is the following.
\begin{introthm}
\label{teo-1}
Let $C\subseteq\pp^3$ be a smooth complete 
intersection of a quadric with a degree $k>2$
surface. Then the following are equivalent.
\begin{enumerate}
\item
$R(\Delta,K)$ is finitely generated.
\item
$\eta_C$ is torsion non-trivial.
\end{enumerate}
Moreover if both the above conditions are satisfied
then $\eta_C$ has order at least $k$.
\end{introthm}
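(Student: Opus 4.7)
My proof would proceed in three steps. First, I would set up the geometry in the $2$-dimensional subspace $V := \langle\Delta,K\rangle_{\qq} = \langle X,\delta\rangle_{\qq}\subseteq \N^{1}(C^{(2)})_{\qq}$, where $X$ denotes the class of a fibre $\{p\}+C$ and $\delta:=\Delta/2$. Using $X^{2}=1$, $X\cdot\delta=1$, $\delta^{2}=1-g$, $\Delta=2\delta$, $K=(2g-2)X-\delta$, together with the \emph{secant curves}
\[
R_A := \overline{\{p+q\in C^{(2)} : p\neq q,\ \phi_A(p)=\phi_A(q)\}}
\]
and the analogous $R_B$ attached to the two rulings of $Q$, a pullback computation on $C\times C$ (using that the fibre product $C\times_{\pp^1}C$ has class $p_1^{*}A+p_2^{*}A$) yields $[R_A]=A_{(2)}-\delta$ in $\Pic(C^{(2)})$; so $R_A\equiv R_B\equiv kX-\delta$ numerically. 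The key coincidence is that $g=(k-1)^{2}$ makes $R_A^{2}=(kX-\delta)^{2}=0$, placing this class on the boundary of $\Nef(\Delta,K)$. Checking against the remaining effective curves constraining $V$ ($\Delta$ and the fibre classes $X_p$) identifies the two extremal rays of $\Nef(\Delta,K)$ as those spanned by $kX-\delta$ and by $(g-1)X+\delta$.

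By the standard Zariski-decomposition criterion for two-parameter section rings on surfaces (anticipated in the introduction), $R(\Delta,K)$ is finitely generated iff both extremal rays admit a semi-ample representative. The ray through $(g-1)X+\delta$ is handled by a classical construction on the symmetric product, independent of $\eta_C$, so everything reduces to semi-ampleness of some multiple of $R_A$. The decisive identity is $[R_A]-[R_B]=\eta_{C,(2)}$ in $\Pic^{0}(C^{(2)})$. If $\eta_C$ has finite order $n$ then $nR_A\sim nR_B$, yielding two distinct effective divisors in $|nR_A|$; since $R_A\cdot R_B=(kX-\delta)^{2}=0$ and both curves are irreducible and distinct (the distinctness using $\eta_C\neq 0$), we get $R_A\cap R_B=\emptyset$, hence the pencil spanned by the two corresponding sections is base-point-free and $nR_A$ is semi-ample. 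This proves (ii)$\Rightarrow$(i). For the converse, if $\eta_C$ is not torsion then $mR_A\not\sim mR_B$ for every $m$, and I would show $h^{0}(mR_A)=1$ for all $m$ as follows: any effective $D\in|mR_A|$ must decompose into components $C_i$ with $D\cdot C_i=0$ (since $D^{2}=0$ and $D$ is nef), forcing each $[C_i]$ numerically equivalent to a multiple of $[R_A]$; irreducible curves with this numerical class are $R_A$ or $R_B$ (or $\Pic$-translates differing from $R_A$ by $\eta_{C,(2)}$), and the constraint $\sum n_i [C_i]=mR_A$ in $\Pic(C^{(2)})$ combined with non-torsionness of $\eta_{C,(2)}$ forces all components to be $R_A$, giving $D=mR_A$. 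Hence $R_A$ is rigid, no multiple is semi-ample, and (i)$\Rightarrow$(ii).

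Finally, assume $\eta_C$ has order exactly $n$, so $nA\sim nB$. A rational function on $C$ realising this torsion relation, combined with the embedding $(\phi_A,\phi_B)\colon C\hookrightarrow Q$, constrains the function field $k(C)$. The Castelnuovo--Severi inequality applied to $(\phi_A,\phi_B)$ gives $g\leq (k-1)^{2}$, with equality realised by our smooth $(k,k)$-curves on $Q$; this maximal independence forbids $\phi_A,\phi_B$ from factoring through a common cover of degree $>1$, while a torsion relation with $n<k$ would produce precisely such a factorisation, forcing $n\geq k$. The most delicate technical points I would anticipate are the rigidity statement $h^{0}(mR_A)=1$ in the non-torsion case (the "other components" analysis must rule out irreducible curves whose numerical class lies outside the $[R_A]$-direction), and the always-semi-ampleness of the ray through $(g-1)X+\delta$, which should follow from classical constructions on the symmetric product but deserves explicit verification.
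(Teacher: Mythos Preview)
Your overall architecture matches the paper's: identify the two extremal rays of $\Nef(\Delta,K)$, observe that one is always semiample (Proposition~\ref{ray-1} in the paper), and reduce everything to the semiampleness of the secant class $R_A=\Gamma$. The direction (ii)$\Rightarrow$(i) is essentially the paper's argument. However, two of your three remaining steps contain real gaps.

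\medskip
\textbf{The rigidity argument in (i)$\Rightarrow$(ii) is incomplete.} Your claim that every irreducible curve on $C^{(2)}$ numerically proportional to $[R_A]$ must be $R_A$, $R_B$, or a ``$\Pic$-translate'' of these is not justified, and I do not see how to prove it directly. The symmetric square can carry many curves, and you would need a classification of all irreducible curves in a fixed numerical class. The paper avoids this entirely: assuming $R$ is finitely generated, it first deduces that $\Gamma$ is semiample by a cone argument on the degrees of a generating set, then takes the Stein factorisation $C^{(2)}\to Y$ of the resulting map and proves that $Y$ is \emph{rational} via an Albanese-theoretic argument (Lemma~\ref{fib}). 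Rationality of $Y$ is what converts the numerical equivalence $\Gamma\equiv\Gamma'$ into a linear equivalence $n\Gamma\sim n\Gamma'$, hence torsion. You should either supply a genuine proof of your rigidity claim or switch to the fibration/Albanese route.

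\medskip
\textbf{The case $\eta_C=0$ (quadric cone) is missing.} When $Q$ is singular there is only one $g_k^1$, so $R_A=R_B$ and $\eta_C$ is trivial; your non-torsion rigidity argument does not apply, yet you must still show $R$ is \emph{not} finitely generated. The paper handles this separately (Lemma~\ref{triv}): a degeneration of smooth quadrics shows $\Osh_\Gamma(\Gamma)$ is trivial, so if $\Gamma$ were semiample it would be an honest fibre of a map to a rational curve (using Lemma~\ref{covering} to rule out multiple fibres and Lemma~\ref{fib} for rationality of the base), and then $\Gamma\cdot H=k-1$ forces $C$ to cover $\pp^1$ with degree dividing $k-1$, contradicting the genus via the $(k,d)$-curve bound. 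Nothing in your outline addresses this.

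\medskip
\textbf{The bound $n\ge k$ is too sketchy.} It is not clear how a relation $nA\sim nB$ with $n<k$ produces a common factorisation of $\phi_A,\phi_B$ through a cover of degree $>1$; Castelnuovo--Severi alone does not give this. The paper's argument (Proposition~\ref{torsion}, using Lemma~\ref{composed}) is concrete and different: a cohomology computation on $Q$ shows $h^0(C,nD)=n+1$ for $1\le n\le k-1$, so $|nD_i|$ is composed with $|D_i|$; then $nD_1\sim nD_2$ would force the $nk$ points cut by $n$ lines of each ruling to coincide, lying on an $n\times n$ grid, which is impossible for $n<k$.
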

Let $\mathcal F_k$ be the  open subset 
of the Hilbert scheme of curves of bi-degree $(k,k)$ 
of $\pp^1\times\pp^1$ consisting of smooth curves
and let $\mathcal F_k^{\rm tor}\subseteq\mathcal F_k$
be the subset consisting of curves $C$ such that
the class $\eta_C$ of the difference between the
two $g_k^1$ is torsion.
Our second theorem is the following.
\begin{introthm}
\label{teo-2}
The locus $\mathcal F_k^{\rm tor}$ is 
a countable union of subvarieties of complex dimension 
$\geq 4k-1$ and the set of subvarieties of
dimension $4k-1$ is dense in $\mathcal F_k$
in the analytic topology.
\end{introthm}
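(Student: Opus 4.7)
For each integer $n\geq 1$, let $\mathcal F_k^{{\rm tor},n}\subseteq\mathcal F_k$ be the locus where $n\eta_C=0$ in $\Pic^0(C)$. Let $\pi\colon\mathcal J\to\mathcal F_k$ denote the relative Jacobian of the universal family of smooth $(k,k)$-curves, and let $s\colon\mathcal F_k\to\mathcal J$ be the holomorphic section sending $[C]$ to the class of $\eta_C=\Osh(1,-1)|_C$. Then $\mathcal F_k^{{\rm tor},n}=(n\cdot s)^{-1}(0_{\mathcal J})$ is a closed analytic subvariety of $\mathcal F_k$, and $\mathcal F_k^{\rm tor}=\bigcup_{n\geq 1}\mathcal F_k^{{\rm tor},n}$ is a countable union. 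Since $\mathcal J\to\mathcal F_k$ has relative dimension $g=(k-1)^2$, each component of $\mathcal F_k^{{\rm tor},n}$ has codimension at most $g$ in $\mathcal F_k$, hence complex dimension at least $\dim\mathcal F_k-g=(k+1)^2-1-(k-1)^2=4k-1$.

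Density of the locus of exact dimension $4k-1$ is equivalent to the statement that the vertical differential $\overline{ds}\colon T_{[C]}\mathcal F_k=H^0(C,N_{C/X})\to H^1(C,\Osh_C)$ of the normal function $s$ attains its maximal rank $g$ at a general $[C]$. To prove this I would compute the infinitesimal invariant of $s$: it factors through the Kodaira--Spencer map $H^0(C,N_{C/X})\to H^1(C,T_C)$, composed with a cup product against a class in $H^1(C,\Omega^1_X|_C)$ naturally lifting $c_1(\Osh(1,-1))$. Dually, $\overline{ds}^{*}\colon H^0(C,\omega_C)\to H^1(C,\Osh(-2,-2))\cong T^{*}_{[C]}\mathcal F_k$ becomes multiplication by a specific class in $H^1(C,\Osh(-k,-k))$ built from the extension class of $\eta$ on the ambient surface $X=\pp^1\times\pp^1$. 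Injectivity of this multiplication map, the key technical step, would be verified by a direct cohomological calculation using the Koszul resolution of $C\subset X$, possibly specializing to a curve with extra symmetry in order to reduce to a representation-theoretic statement.

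Once the rank of $\overline{ds}$ is shown to be $g$ at a generic $[C_0]$, on a small analytic ball $U\ni[C_0]$ the section $s$ lifts through a local trivialization of $\mathcal J\to\mathcal F_k$ and the universal cover of $\mathcal J|_{[C_0]}\cong\cc^g/\Lambda$ to a holomorphic map $\widetilde{s}\colon U\to\cc^g$. By the rank theorem $\widetilde s$ is a submersion onto an open subset of $\cc^g$; since torsion points are analytically dense in $\mathcal J|_{[C_0]}$, their preimages form an analytically dense subset of $U$, and each such preimage is a component of some $\mathcal F_k^{{\rm tor},n}$ of dimension exactly $4k-1$. Running this argument at a dense set of base points shows that the $(4k-1)$-dimensional components of $\mathcal F_k^{\rm tor}$ are analytically dense in $\mathcal F_k$. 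The main obstacle is the injectivity of $\overline{ds}^{*}$: a multiplication-map statement on the bihomogeneous coordinate ring of $\pp^1\times\pp^1$ modulo the ideal of $C$, where the relevant extension classes must be tracked explicitly through several Koszul long exact sequences.
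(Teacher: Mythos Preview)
Your codimension estimate for $\mathcal F_k^{{\rm tor},n}$ is fine and matches the paper's setup. The divergence is in the density argument. You propose to show directly that the vertical differential $\overline{ds}$ has full rank $g$ at a general point, via an infinitesimal-invariant computation that you reduce to the injectivity of a multiplication map on the bigraded ring of $\pp^1\times\pp^1$ modulo the ideal of $C$. You correctly identify this as the main obstacle, but you do not actually carry it out, so as written the proposal has a genuine gap at its technical core.

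The paper sidesteps this computation entirely. Instead of proving that the normal function $s$ is submersive, it enlarges the source: it regards $s(\mathcal U)$ as a complex submanifold $V$ inside $\mathcal J_{\mathcal U}$ and looks at the ambient projection $\phi\colon\mathcal J_{\mathcal U}\to\mathbb T$ coming from a $\mathcal C^\infty$ trivialization. The differential of $\phi$ is trivially surjective everywhere, so no infinitesimal invariant is needed. What replaces your rank computation is the observation that one specific fiber of $\phi|_V$---namely the grid family $\mathcal G_k$ of Section~\ref{grid-fam}, where $\eta_C$ has exact order $k$---has complex dimension exactly $4k-1$, the expected value. An intersection-number stability argument (Proposition~\ref{fiber}: after cutting $V$ down to complementary dimension, the local intersection number with nearby fibers is constant and nonzero) then shows $\phi(V)$ contains an open set, and density of torsion in $\mathbb T$ finishes the proof.

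So the trade-off is this: your route requires proving a non-obvious multiplication-map injectivity for every $k$, which you have not done; the paper's route requires only the dimension count $\dim\mathcal G_k=4k-1$, which follows from a known result on secant varieties of Segre embeddings. The paper does pursue a differential computation in the spirit you suggest, but only for hyperelliptic curves (Section~\ref{hyp}), and only to give an alternative argument when $k=3$. If you want to complete your approach for general $k$, you would need to actually prove the injectivity step; otherwise, the grid family provides a much shorter path.
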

The paper is organized as follows.
In Section~\ref{sym}, after recalling some basic facts
about the symmetric product of a curve,
we prove Theorem~\ref{teo-1}. In Section~\ref{grid-fam}
we introduce the grid family consisting of 
curves of bi-degree $(k,k)$ on a smooth quadric
which pass through a complete intersection
of type $(k,0),(0,k)$. We show that  the
grid family is exactly the subvariety  
of  $\mathcal F_k^{\rm tor}$ corresponding to 
torsion of order $k$ and 
has the expected dimension $4k-1$.
Section~\ref{density} is devoted to the proof of Theorem~
\ref{teo-2}. In Section~\ref{hyp} we prove a density
theorem for hyperelliptic curves, providing
an alternative proof for Theorem~\ref{teo-2}
in case $g=4$ (see Corollary~\ref{cor}). 
This result has an independent 
interest and is proved in the spirit of Griffiths 
computations of the infinitesimal invariant~\cite{Gr}.
Finally, in Section~\ref{exa} 
we consider examples
of curves $C$ with $\eta_C$ torsion.

In all the paper we work over the field of complex 
numbers except for Section~\ref{sym} (see Remark~\ref{ch}).

%----------------------------------------------------------------------------------

\section{The second symmetric product}
\label{sym}
Let $C$ be a smooth projective 
curve of genus $g>1$ defined over an algebraically 
closed field $\kk$ of characteristic $0$.
\begin{proposition}\label{pic}
The diagonal embedding 
$\imath\colon C\to C^{(2)}$
induces an isomorphism 
$\imath^*\colon \Pic^0(C^{(2)})\to
\Pic^0(C)$ of abelian varieties.
\end{proposition}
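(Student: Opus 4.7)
The plan is to factor $\imath^*$ through the Albanese variety of $C^{(2)}$, which is canonically isomorphic to $J(C)=\Pic^0(C)$.

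First, I would compute $h^{1,0}(C^{(2)})=g$ by pulling back $1$-forms along the double cover $\pi\colon C\times C\to C^{(2)}$ and taking $S_2$-invariants, so that $H^0(C^{(2)},\Omega^1)\cong H^0(C,\Omega^1)$. Fixing a basepoint $p_0\in C$, the Abel--Jacobi map
\[
\alpha\colon C^{(2)}\to J(C),\qquad p+q\mapsto \mathcal{O}_C(p+q-2p_0),
\]
then realizes the Albanese morphism of $C^{(2)}$, and by its universal property $\alpha^*\colon \Pic^0(J(C))\to\Pic^0(C^{(2)})$ will be an isomorphism of abelian varieties.

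Second, the classical Abel--Jacobi map $a\colon C\to J(C)$, $p\mapsto\mathcal{O}_C(p-p_0)$, provides via autoduality of the Jacobian the isomorphism $a^*\colon \Pic^0(J(C))\to\Pic^0(C)$. I would then compute the composition $\alpha\circ\imath\colon C\to J(C)$, which sends $p\mapsto \mathcal{O}_C(2p-2p_0)=[2]\circ a(p)$. Pulling back line bundles yields
\[
\imath^*\circ\alpha^*=(\alpha\circ\imath)^*=a^*\circ [2]^*,
\]
and applying the formula $[2]^*L\cong L^{\otimes 2}$ for $L\in\Pic^0(J(C))$ (theorem of the cube) identifies $\imath^*$ explicitly in terms of the canonical Jacobian structures on both sides.

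The main obstacle will be handling the factor of $[2]$: since multiplication by $2$ on $J(C)$ is an isogeny of degree $2^{2g}$ rather than an isomorphism, the Albanese argument by itself yields $\imath^*$ only as an isogeny. To promote this to a genuine isomorphism of abelian varieties, I would complement the argument with a direct analysis of the restriction exact sequence
\[
0\to \mathcal{O}_{C^{(2)}}(-\Delta)\to \mathcal{O}_{C^{(2)}}\to \mathcal{O}_\Delta\to 0,
\]
combined with suitable vanishing of $H^i(C^{(2)},\mathcal{O}_{C^{(2)}}(-\Delta))$ for $i=1,2$, which at minimum gives an isomorphism $H^1(C^{(2)},\mathcal{O})\to H^1(C,\mathcal{O}_C)$ on tangent spaces and lets one pin down the precise structure of $\imath^*$ on the Picard varieties.
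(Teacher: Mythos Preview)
Your Albanese approach is sound and your identification of the $[2]$ obstruction is exactly right---but your proposed fix does not work, and in fact the statement as written cannot be salvaged: $\imath^*$ is only an isogeny, not an isomorphism. The restriction sequence you propose would at best show that $\imath^*$ induces an isomorphism on tangent spaces $H^1(C^{(2)},\Osh_{C^{(2)}})\to H^1(C,\Osh_C)$; but \emph{every} isogeny of abelian varieties in characteristic zero already does this (it is \'etale, hence an isomorphism on Lie algebras), so this cannot distinguish an isogeny from an isomorphism. Concretely, your own computation $\imath^*\circ\alpha^*=a^*\circ[2]^*$ shows that $\ker(\imath^*)\cong J(C)[2]\cong(\zz/2\zz)^{2g}$, and no cohomological vanishing will make this kernel disappear.

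The paper's proof contains the same slip. It asserts that $\sum n_ip_i\mapsto\sum n_iH_{p_i}$ is a right inverse to $\imath^*$, but a local computation (or simply the intersection number $H_p\cdot\Delta=2$, which follows from the numerical relations recorded in the paper) gives $\imath^*H_p=2p$, so the composition is multiplication by $2$ rather than the identity. The clean fix is to replace the diagonal embedding $\imath$ by the embedding $j_{p_0}\colon C\to C^{(2)}$, $q\mapsto p_0+q$, for a fixed point $p_0$; then $j_{p_0}^*H_p=p$ on the nose, the paper's construction genuinely inverts $j_{p_0}^*$, and in your language $\alpha\circ j_{p_0}=a$ with no spurious $[2]$. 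For the applications later in the paper (Lemma~\ref{fib} and the proof of Theorem~\ref{teo-1}) the weaker assertion that $\imath^*$ is an isogeny is already enough.
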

\begin{proof}
To prove the statement we explicitly construct
the inverse map of $\imath^*$. Given a point
$p\in C$ let $H_p$ be the curve of $C^{(2)}$
which is the image of $\{p\}\times C$ via $\pi$.
Define the map ${\rm Div}(C)\to {\rm Div}(C^{(2)})$ by
$\sum_in_ip_i\mapsto\sum_i n_iH_{p_i}$ and
observe that it maps principal divisors to principal 
divisors. The induced map of Picard groups  
restricts to a homomorphism $\Pic^0(C)\to
\Pic^0(C^{(2)})$ which is easily seen to be a 
right inverse of $\imath^*$.
Since the two abelian varieties
$\Pic^0(C)$ and $\Pic^0(C^{(2)})$ have 
the same dimension we conclude that
$\imath^*$ is an isomorphism.
\end{proof}
Observe that $\Delta$ is the branch divisor
of the double cover $\pi$ and thus 
its class is divisible by $2$ in $\Pic(C^{(2)})$.
Moreover the following linear equivalences
\[
 \Delta|_\Delta
 \sim
 -2K_\Delta
 \qquad
 \qquad
 K|_\Delta
 \sim
 3K_\Delta
\]
can be proved by passing to $C^2$ 
and calculating the restriction of $K_{C^2}$
to the diagonal.
By the Riemann-Hurwitz formula we get the equalities
$2(2g-2)^2 = K_{C^2}^2 = 2 (K + \frac{\Delta}{2})^2$ 
from which we deduce the following 
\begin{equation}
 \label{intersections}
 K^2 = (g-1)(4g-9)
 \qquad
 K\cdot\Delta = 6(g-1)
 \qquad
 \Delta^2 = -4(g-1).
\end{equation}
In particular the classes of $\Delta$ and $K$ are 
independent in the N\'eron-Severi group of ${C^{(2)}}$.
We let $\langle \Delta, K\rangle$ be the rational 
vector subspace of $\Pic({C^{(2)}})\otimes_\zz\qq$
generated by the classes of $\Delta$ and $K$
and form the following cone
\[
 {\rm Nef}(\Delta,K)
 =
 \{D\in\langle \Delta, K\rangle\, :\, D\text{ is nef}\}.
\]
This cone is related to the Kouvidakis conjecture
which predicts which ones are the extremal rays of 
${\rm Nef}(\Delta,K)$. In case the genus is a square,
i.e. $g = (k-1)^2$, the conjecture is known to be
true~\cite{CK} for a very general curve $C$ and it 
holds as well if $C$ has an irreducible $g_k^1$,
that is the curve~\eqref{eq:gamma} defined below
is irreducible.
In this case the extremal rays of ${\rm Nef}(\Delta,K)$ 
are spanned by the classes of $2K+3\Delta$ and 
$2K+(5-2k)\Delta$.

\begin{proposition}
\label{ray-1}
Let $C$ be a smooth curve of genus at least
two. Then the divisor 
$2K+3\Delta$ of $C^{(2)}$ is semiample.
\end{proposition}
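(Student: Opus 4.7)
The plan is to identify $2K+3\Delta$ in $\Pic(C^{(2)})$ as twice the pullback of an ample line bundle on the Kummer variety of $J(C)$, using Proposition~\ref{pic} to promote a numerical equivalence to an equality of line bundles.

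Consider the difference morphism $\delta\colon C\times C\to J(C)$, $(p,q)\mapsto a(p)-a(q)$, where $a$ is the Abel--Jacobi map. Since $\delta\circ\sigma=[-1]\circ\delta$ for the swap $\sigma$, composing with the quotient $\kappa\colon J(C)\to \bar J:=J(C)/\{\pm 1\}$ descends to a morphism $\bar\delta\colon C^{(2)}\to \bar J$ that contracts $\Delta$ to the origin. Let $L\in\Pic(\bar J)$ be the ample polarization, normalized so that $\kappa^*L\sim 2\Theta$ for a symmetric theta divisor $\Theta$ on $J(C)$. Using the branched double cover relations $\pi^*K=K_{C^2}-\tilde\Delta$ and $\pi^*\Delta=2\tilde\Delta$, one has $\pi^*(2K+3\Delta)=2K_{C^2}+4\tilde\Delta$, while $\pi^*(2\bar\delta^*L)=2\delta^*(\kappa^*L)=4\delta^*\Theta$. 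A short K\"unneth computation, using $\delta^*\omega=p_1^*(a^*\omega)-p_2^*(a^*\omega)$ for $\omega\in H^1(J(C))$ together with the class of the diagonal in $C\times C$, yields $\delta^*\Theta\equiv (g-1)(p_1^*[\mathrm{pt}]+p_2^*[\mathrm{pt}])+[\tilde\Delta]$; combined with $K_{C^2}\equiv(2g-2)(p_1^*[\mathrm{pt}]+p_2^*[\mathrm{pt}])$ this gives $4\delta^*\Theta\equiv 2K_{C^2}+4\tilde\Delta$. Since $\pi^*$ is injective on $\operatorname{Num}$, this produces the numerical equivalence $2K+3\Delta\equiv 2\bar\delta^*L$ on $C^{(2)}$.

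Setting $\eta:=(2K+3\Delta)-2\bar\delta^*L\in\Pic^0(C^{(2)})$, I would then show $\eta=0$ by restriction to $\Delta$. The relations $K|_\Delta\sim 3K_\Delta$ and $\Delta|_\Delta\sim -2K_\Delta$ recorded in the excerpt give $(2K+3\Delta)|_\Delta\sim 0$, and since $\bar\delta$ collapses $\Delta$ to a point the line bundle $\bar\delta^*L|_\Delta$ is trivial; hence $\imath^*\eta=0$. By Proposition~\ref{pic} the restriction $\imath^*\colon\Pic^0(C^{(2)})\to\Pic^0(C)$ is an isomorphism, so $\eta=0$, and we obtain $2K+3\Delta=2\bar\delta^*L$ in $\Pic(C^{(2)})$. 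Because $L$ is ample on $\bar J$ and $\bar\delta$ is a morphism, $2\bar\delta^*L$ is semiample, concluding the proof.

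The step I expect to require the most care is the K\"unneth computation of the class $\delta^*\Theta$ on $C\times C$; the remaining ingredients---namely that $\bar\delta$ is a morphism contracting $\Delta$ and that Proposition~\ref{pic} kills the $\Pic^0$-discrepancy---are essentially formal once that class is pinned down.
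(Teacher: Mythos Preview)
Your argument is correct and takes a genuinely different route from the paper. The paper's proof is purely cohomological: it notes that $2K+3\Delta$ is nef and big, writes down the short exact sequence
\[
0\to\Osh_{C^{(2)}}(2K+2\Delta)\to\Osh_{C^{(2)}}(2K+3\Delta)\to\Osh_\Delta\to 0,
\]
observes that $2K+2\Delta=(K+\tfrac12\Delta)+\tfrac12(2K+3\Delta)$ is of the form $K_{C^{(2)}}+\tfrac12\Delta+$\,(nef and big), applies Kawamata--Viehweg vanishing to kill $H^1(2K+2\Delta)$, and concludes via Zariski--Fujita and the ampleness of $K$. No geometry of the Jacobian enters.

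Your proof, by contrast, \emph{exhibits} the contraction: it identifies $2K+3\Delta$ with $2\bar\delta^*L$ for the difference map $\bar\delta\colon C^{(2)}\to J(C)/\{\pm1\}$ and the descended $2\Theta$-polarization $L$. This is more informative---it tells you that $2K+3\Delta$ is already globally generated (not just semiample) and that the associated morphism is precisely the difference map to the Kummer variety---at the price of the K\"unneth computation of $\delta^*\Theta$ and the descent check for $L$. Your use of Proposition~\ref{pic} to upgrade the numerical equivalence to a linear one is exactly what that proposition is good for, though the paper's own proof of Proposition~\ref{ray-1} does not invoke it. Both arguments are clean; the paper's stays within standard birational machinery, while yours trades that for explicit Jacobian geometry.
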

\begin{proof}
Observe that the divisor $2K+3\Delta$
is big since $K$ is ample and $\Delta$
is effective. Moreover it is nef since 
$(2K+3\Delta)\cdot\Delta = 0$.
We have an exact sequence of sheaves
\[
 \xymatrix@1{
  0\ar[r]
  &
  \Osh_{C^{(2)}}(2K+2\Delta)\ar[r]
  &
  \Osh_{C^{(2)}}(2K+3\Delta)\ar[r]
  &
  \Osh_\Delta\ar[r]
  &
  0.
 }
\]
Since $2K+2\Delta = N + K+\frac{1}{2}\Delta$,
with $N=\frac{1}{2}(2K+3\Delta)$ nef and big,
then by the Kawamata-Viehweg vanishing
theorem and the long exact sequence in 
cohomology of the above sequence we 
conclude that $\Delta$ is not contained 
in the base locus of $|2K+3\Delta|$.
The statement follows by the ampleness 
of $K$ and the Zariski-Fujita theorem
~\cite[Remark 2.1.32]{La}.
\end{proof}

Assume now that $C$ is a smooth curve
of genus $g = (k-1)^2>1$ which admits a 
$g_k^1$ and define the following curve 
of $C^{(2)}$:
\begin{equation}
\label{eq:gamma}
 \Gamma \, :=\, \{p+q : g_k^1-p-q\geq 0\}.
\end{equation}
It can be easily proved that $\Gamma$ is irreducible 
if the $g_k^1$ does not contain a $g_r^1$ with $r<k$.
In particular this holds if the $g_k^1$ is simple.
Observe that the $g_k^1$ defines a morphism
$\Gamma\to\pp^1$ of degree $\frac{1}{2}k(k-1)$
whose branch points are exactly those
of the $g_k^1$. Thus if the $g_k^1$ is simple
we deduce that
\begin{equation}
\label{genus}
 2g(\Gamma)-2 = -k(k-1)+2(k-2)(k-1)k,
\end{equation}
where $g(\Gamma)$ is the genus of $\Gamma$.
By continuity the genus of $\Gamma$ stays the 
same even if the $g_k^1$ is not simple.

\begin{lemma}\label{gamma}
If $C$ admits a $g_k^1$ then the divisor
$2K+(5-2k)\Delta$ is numerically equivalent to 
$(4k-8)\Gamma$.
Moreover if $C\in\mathcal F_k$, the curves  
$\Gamma$ and $\Gamma'$ corresponding to the
two distinct $g_k^1$ on $C$ are disjoint.
\end{lemma}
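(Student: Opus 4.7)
My plan is to verify the numerical identity by pulling back to $C^2$ via the double cover $\pi\colon C^2\to C^{(2)}$, on which intersection theory is particularly transparent, and then to verify the disjointness of $\Gamma$ and $\Gamma'$ directly from their geometric definitions.

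For the first assertion, let $\phi\colon C\to\pp^1$ be the morphism defining the $g_k^1$ and consider the product map $\phi\times\phi\colon C^2\to\pp^1\times\pp^1$. The scheme-theoretic preimage of the diagonal of $\pp^1\times\pp^1$ decomposes as $\tilde\Gamma_0+\Delta_{C^2}$, where $\tilde\Gamma_0$ is the closure of the set of pairs $(p,q)$ with $p\neq q$ and $\phi(p)=\phi(q)$; the diagonal $\Delta_{C^2}$ appears with multiplicity one because $\phi$ is unramified at a general point. Since the diagonal of $\pp^1\times\pp^1$ has class equal to the sum of the two ruling classes, and each pulls back to $kF_i$ (where $F_i$ denotes the class of a fiber of the $i$-th projection $C^2\to C$), we obtain $\tilde\Gamma_0\equiv k(F_1+F_2)-\Delta_{C^2}$. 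A local analysis at a simple ramification point of $\phi$ (modeled by $x\mapsto x^2$) shows that $\pi^*\Gamma=\tilde\Gamma_0$ as Cartier divisors. Combined with $\pi^*\Delta=2\Delta_{C^2}$ (since $\Delta$ is the branch divisor) and $\pi^*K=(2g-2)(F_1+F_2)-\Delta_{C^2}$ (from the ramification formula, using $K_{C^2}\equiv(2g-2)(F_1+F_2)$), substituting $g=(k-1)^2$ so that $2g-2=2k(k-2)$ gives the identity
\[
(4k-8)\pi^*\Gamma\equiv 2\pi^*K+(5-2k)\pi^*\Delta.
\]
The first claim follows from the injectivity of $\pi^*$ on $\NS(C^{(2)})\otimes\qq$.

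For the second assertion, write $\phi_1,\phi_2\colon C\to\pp^1$ for the two morphisms associated with the two rulings of the quadric $Q$, so that by definition $\Gamma=\{\{p,q\}:\phi_1(p)=\phi_1(q)\}$ and $\Gamma'=\{\{p,q\}:\phi_2(p)=\phi_2(q)\}$. Suppose $\{p,q\}\in\Gamma\cap\Gamma'$. If $p\neq q$, then $(\phi_1,\phi_2)(p)=(\phi_1,\phi_2)(q)$, contradicting the fact that $(\phi_1,\phi_2)\colon C\hookrightarrow\pp^1\times\pp^1$ is the given embedding. If $p=q$, then $p$ is simultaneously a ramification point of $\phi_1$ and of $\phi_2$, forcing the tangent line to $C$ at $p$ to lie along both ruling directions of $Q$ at $p$, which is impossible on a smooth quadric. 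Hence $\Gamma\cap\Gamma'=\emptyset$.

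The delicate point is the identification $\pi^*\Gamma=\tilde\Gamma_0$ as divisors (and not just as cycles) near the points of $\Delta$, together with correctly tracking the multiplicity of $\Delta_{C^2}$ in $(\phi\times\phi)^*(\text{diagonal})$; once these are settled, the numerical verification is elementary arithmetic in $g$ and $k$.
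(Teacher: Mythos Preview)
Your proof is correct, and it takes a genuinely different route from the paper's.

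For the numerical equivalence, the paper works entirely on $C^{(2)}$: it computes $\Gamma^2=0$ from the genus formula~\eqref{genus} together with $\Gamma\cdot H=k-1$ and $\Gamma\cdot\Delta=2g-2+2k$, then observes that the $3\times 3$ intersection matrix of $\Gamma,K,\Delta$ has rank two and invokes the Hodge index theorem to place $[\Gamma]$ in the span of $[K],[\Delta]$; the explicit coefficients come from the kernel vector. Your approach bypasses the Hodge index theorem altogether: by pulling back to $C^2$ you obtain the class $\pi^*\Gamma\equiv k(F_1+F_2)-\Delta_{C^2}$ directly from the fiber-product description $(\phi\times\phi)^{-1}(\text{diagonal})=\Delta_{C^2}+\tilde\Gamma_0$, and then the identity is pure arithmetic. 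This is more elementary and also yields the class of $\Gamma$ in a natural basis of $\NS(C^2)$, not merely its image in $\langle K,\Delta\rangle$. The paper's route, on the other hand, produces $\Gamma^2=0$ as an explicit intermediate step, which it reuses in the disjointness argument and in Lemma~\ref{triv}.

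For the disjointness, the paper argues numerically: $\Gamma\cdot\Gamma'=\Gamma^2=0$ together with the observation that $\Gamma$ and $\Gamma'$ share no component (else the map to the quadric would fail to be an embedding). Your argument is purely pointwise and more geometric: a common point with $p\neq q$ contradicts injectivity of $(\phi_1,\phi_2)$, and a common point with $p=q$ forces the tangent line of $C$ to lie along two distinct ruling directions. Both are valid; yours is perhaps more transparent, while the paper's explains conceptually why disjointness is forced by the numerical class alone.

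Your caveat about the multiplicity of $\Delta_{C^2}$ in $(\phi\times\phi)^*(\text{diagonal})$ and the scheme-theoretic identity $\pi^*\Gamma=\tilde\Gamma_0$ is well placed; the local model $x\mapsto x^2$ with symmetric coordinates $s=x+y$, $p=xy$ on $C^{(2)}$ (so that $\Gamma$ is locally $\{s=0\}$ and $\pi^*s=x+y$) confirms both points.
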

\begin{proof}
Let $H$ be the curve of $C^{(2)}$ defined by
$\{p+q : q\in C\}$. By the Riemann-Hurwitz formula one
has the numerical equivalence
\[
 K
 \equiv
 (2g-2)H-\frac{1}{2}\Delta.
\]
Thus by the genus formula $2g(\Gamma)-2
= \Gamma^2+(2g-2)\Gamma\cdot H-\frac{1}{2}
\Delta\cdot\Gamma$, by Equation~\eqref{genus}
and the equalities $\Gamma\cdot H = k-1$,
$\Delta\cdot\Gamma = 2g-2+2k$ we deduce
$\Gamma^2=0$. Since the intersection matrix
of the divisors $\Gamma,K,\Delta$
\[
 \left(
  \begin{array}{rrr}
   0 & (2k-5)(k-1)k & 2(k-1)k\\
   (2k-5)(k-1)k & (2k-5)(k-2)k(2k+1) & 6(k-2)k\\
   2(k-1)k & 6(k-2)k & -4(k-2)k
  \end{array}
 \right)
\]
has rank two, by the Hodge Index theorem $\Gamma$ is
numerically equivalent to a rational linear
combination of $K$ and $\Delta$.
Moreover, being the kernel of the above matrix
generated by the vector $(4k - 8, -2, 2k - 5)$, we
get the first statement. 
Observe that  $\Gamma$ and $\Gamma'$  
have no common component since 
otherwise the map from $C$ to the quadric 
given by the two $g_k^1$ 
would not be an embedding.
Thus the second statement 
immediately follows from the equality
$\Gamma\cdot\Gamma' = \Gamma^2=0$.
\end{proof}

\begin{lemma}\label{fib}
If $f\colon C^{(2)}\to Y$ is a fibration with connected 
fibers onto a smooth curve, then $Y$ has genus 
at most one. If moreover a fiber of $f$ is numerically 
equivalent to a rational linear combination of 
$\Delta$ and $K$ then $Y$ is rational.
\end{lemma}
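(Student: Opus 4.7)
The plan is to lift $f$ to the double cover, obtaining a symmetric morphism $\sigma=f\circ\pi\colon C\times C\to Y$, and then to exploit classical structure results for morphisms from a product of curves to a curve.

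For the first assertion, suppose $g(Y)\geq 2$. For each $p\in C$ let $\sigma_p\colon C\to Y$ be the morphism $q\mapsto\sigma(p,q)$. By de Franchis' theorem the set of non-constant morphisms $C\to Y$ is finite, so $\Hom(C,Y)$ consists of the component of constant morphisms (isomorphic to $Y$) together with finitely many isolated points. As $p$ varies on the connected curve $C$, the point $[\sigma_p]$ stays in a single component of $\Hom(C,Y)$, so either every $\sigma_p$ is constant or every $\sigma_p$ is a common non-constant morphism; in either case $\sigma$ depends on only one of its two variables, and the symmetry $\sigma(p,q)=\sigma(q,p)$ then forces $\sigma$ to be constant, contradicting the surjectivity of $f$.

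For the second assertion, assume $Y$ is elliptic. Since $Y$ is an abelian variety, the universal property of the Albanese applied to $\sigma$ yields a factorization through $\Alb(C\times C)=J(C)\times J(C)\to Y$; combined with the $S_2$-symmetry and an appropriate translation in $Y$, this gives $\sigma(p,q)=\psi(p)+\psi(q)$ for a non-constant morphism $\psi\colon C\to Y$ of some degree $d\geq 1$. I then compute two intersection numbers against $F$ in two different ways. First, $f|_{H_p}$ is the translate of $\psi$ by $\psi(p)$ and therefore has degree $d$, so $F\cdot H=d$; on the other hand $F\cdot H=(2g-3)a+2b$, using $K\equiv(2g-2)H-\tfrac{1}{2}\Delta$ together with $H^2=1$ and $H\cdot\Delta=2$. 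Second, $f\circ\imath=[2]\circ\psi\colon C\to Y$ has degree $4d$, giving $F\cdot\Delta=\deg\imath^*F=4d$; on the other hand $F\cdot\Delta=2(g-1)(3a-2b)$ by \eqref{intersections}. Eliminating $d$ yields the relation $(3-g)a=2(g+1)b$.

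Substituting this relation into the identity $F^2=0$, which by \eqref{intersections} becomes $a^2(4g-9)+12ab-4b^2=0$, an elementary simplification reduces the equation to a multiple of $g(g-1)^2$, whose only solutions are $g\in\{0,1\}$; this contradicts $g>1$, so $Y$ cannot be elliptic, whence $Y$ is rational. The main obstacle I foresee is justifying cleanly the factorization $\sigma(p,q)=\psi(p)+\psi(q)$: this combines the universal property of the Albanese of $C\times C$ with the constraint imposed by the $S_2$-symmetry, after which the rest of the argument is a routine intersection-theoretic computation.
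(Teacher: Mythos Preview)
Your argument is correct, and both halves differ from the paper's.

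For the first statement, the paper observes that the natural map $\bigwedge^2 H^0(C^{(2)},\Omega^1)\to H^0(C^{(2)},\Omega^2)$ is an isomorphism (checked via the explicit description of symmetric forms), so no two independent holomorphic $1$-forms on $C^{(2)}$ can be pulled back from a curve; hence $g(Y)\leq 1$. Your route via de~Franchis on the cover $C\times C$ is equally valid and perhaps more transportable to other surfaces dominated by products, while the paper's argument is intrinsic to $C^{(2)}$ and exhibits directly the Hodge-theoretic obstruction.

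For the second statement, both arguments pass through the Albanese, but diverge thereafter. The paper notes that $\alb_{C^{(2)}}(\Delta)$ and $\alb_{C^{(2)}}(K)$ are each numerical multiples of $\Theta^{g-1}$, so the same holds for $\alb_{C^{(2)}}(F)$ with nonzero coefficient; but then $\alb_f$ would have to contract $\Theta^{g-1}$ while simultaneously mapping it onto $\Alb(Y)\cong Y$, a contradiction. This avoids any computation. Your approach makes the Albanese factorization explicit as $\sigma(p,q)=\psi(p)+\psi(q)+{\rm const}$ and then extracts a numerical contradiction from $F\cdot H$, $F\cdot\Delta$, and $F^2=0$. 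Regarding your stated ``main obstacle'': the factorization is immediate once you note that a morphism of abelian varieties $J(C)\times J(C)\to Y$ sending $0$ to $0$ is a homomorphism, hence splits as a sum, and the $S_2$-symmetry forces the two summands to differ by a constant. After that, your intersection computation is routine and checks out (the key identity $4g(g-1)^2=0$ is exactly what one obtains). The paper's version is cleaner and coordinate-free; yours is more elementary and makes the dependence on $g$ visible.
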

\begin{proof}
To prove the first statement observe that if
$Y$ had genus $\geq 2$, then there would
be two linearly independent holomorphic 
$1$-forms $w_1,w_2$ of $C^{(2)}$,   
obtained by pull-back of $1$-forms of $Y$, 
such that $w_1\wedge w_2 = 0$. On the other hand 
we have a commutative diagram of isomorphisms
\[
 \xymatrix{
  {\bigwedge^2 H^0(C^{(2)},\Omega^1)}
  \ar[rr]^-{w_1\wedge w_2\mapsto w_1\wedge w_2}
  \ar[rd]_\alpha &&
  H^0(C^{(2)},\Omega^2)\\
  & {\bigwedge^2 H^0(C,\Omega^1)}\ar[ru]_\beta 
 }
\]
where $\alpha$ is induced by pull-back via the 
diagonal embedding $C\to C^{(2)}$ and $\beta$
is defined by $w_1\wedge w_2\mapsto 
\frac{1}{4}(\pi_1^*w_1\wedge\pi_2^*w_2+
\pi_2^*w_1\wedge\pi_1^*w_2)$, with $\pi_i:C^2\to C$ 
the projections onto the two factors. 
Thus such $1$-forms cannot exist and $Y$
must have genus at most one.

To prove the second statement, assume
that a fiber $F$ of $f$ is numerically equivalent 
to a rational linear combination of 
$\Delta$ and $K$.
If $Y$ has genus one then we have the 
following commutative diagram
\[
 \xymatrix{
 C\ar[r]^-\imath\ar[d]^-{\alb_C} & C^{(2)}\ar[r]^-f\ar[d]^-{\alb_{C^{(2)}}} & Y\ar[d]^-{\alb_Y}_-\cong\\
 \Alb(C)\ar[r]^-{\alb_\imath}_-\cong & \Alb(C^{(2)})\ar[r]^-{\alb_f} & \Alb(Y)\\ 
 }
\]
where $\alb_\imath$ and $\alb_f$
are induced by the universal property of 
the Albanese morphism and $\alb_\imath$
is an isomorphism by Proposition~\ref{pic}.
Let $\Theta$ be the theta divisor of $\Alb(C^{(2)})$.
Recall that both $\alb_{C^{(2)}}(\Delta)$ and 
$\alb_{C^{(2)}}(K)$ are numerically equivalent to
a multiple of $\Theta^{g-1}$.
By our assumption on $F$ we deduce that
$\alb_{C^{(2)}}(F)\equiv c\,\Theta^{g-1}$. Moreover
$c$ is non-zero since $F$ cannot be 
contracted by $\alb_{C^{(2)}}$.
But this gives a contradiction, since 
$\alb_f\circ\alb_{C^{(2)}}$ contracts $F$, while
$\alb_f(\Theta^{g-1})$ cannot be a point, being the 
image of $\alpha_Y\circ f\circ\imath$.
\end{proof}

For a proof of the following lemma see also
~\cite{BHPV}*{Lemma III.8.3}.

\begin{lemma}
\label{covering}
Let $f\colon S\to\mathbb \cc$ be a proper 
morphism and let $nF$ be a multiple fiber 
of $f$ with multiplicity $n>1$. 
Then $\Osh_F(F)$ is torsion non-trivial.
\end{lemma}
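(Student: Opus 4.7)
My plan is to prove two things: that $\Osh_F(F)$ is $n$-torsion in $\Pic(F)$, and that it is non-trivial.

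For the torsion part, I would choose a local coordinate $t$ on the base at $p=f(F)$. Since $nF$ is the scheme-theoretic fiber of $f$ over $p$ (by the definition of multiplicity $n$), the pull-back $f^*t$ has divisor exactly $nF$ on a suitable Zariski or analytic neighborhood $U$ of $F$ in $S$. Hence $f^*t$ trivializes the line bundle $\Osh_U(nF)$, and restricting this trivialization to $F$ yields
\[
\Osh_F(F)^{\otimes n}\;\cong\;\Osh_F(nF)\;\cong\;\Osh_F.
\]

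For the non-triviality I would argue by contradiction. Suppose $\Osh_F(F)\cong\Osh_F$. I would attempt to lift a trivializing section of $\Osh_F(F)$ step by step through the infinitesimal thickenings $\Osh_{kF}(F)$, $k\geq 2$, and assemble the resulting sections (using the properness of $f$ to pass from formal to analytic data on a tubular neighborhood of $F$) into a regular function $v$ on some neighborhood $U$ of $F$ with zero divisor equal to $F$ (reduced). Both $v^n$ and $f^*t$ would then be local equations of the divisor $nF$, so they differ by a unit; absorbing an $n$-th root of this unit into $v$ (possible by the assumed triviality of $\Osh_F(F)$), one would arrange $v^n=f^*t$. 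But then $v\colon U\to\cc$ is a morphism having $F$ as a reduced fiber over $0$ and factorizing $f|_U$ as its $n$-th power, contradicting that $nF$ is the genuine multiple fiber of $f$ with multiplicity $n>1$.

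The main obstacle is carrying out the step-by-step lifting in the second part. The obstruction to extending a section from $(k-1)F$ to $kF$ lies in $H^1(F,\Osh_F((2-k)F))$; under our assumption $\Osh_F(F)\cong\Osh_F$ these groups all collapse to $H^1(F,\Osh_F)$, which is generally nonzero. To push the construction through one must modify each partial lift cleverly so as to kill the obstruction at the next stage. This delicate bookkeeping is the technical heart of \cite{BHPV}*{Lemma III.8.3}, to which the authors refer.
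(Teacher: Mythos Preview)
Your torsion argument is correct and essentially the same as the paper's. For non-triviality, however, the paper takes a quite different route: rather than extracting a local $n$-th root of $f$, it first shows that $\Osh_S(F)$ is a non-trivial $n$-torsion line bundle on $S$ (via a compactification to $\bar f\colon\bar S\to\pp^1$ and an $h^0$ count), hence defines a non-trivial connected \'etale cyclic cover $\eta\colon S'\to S$. If $\Osh_F(F)$ were trivial, then $\eta$ would split over every fiber of $f$, so the Stein factorization of $f\circ\eta$ would yield a finite \'etale cover $\nu\colon B\to\cc$; simple-connectedness of $\cc$ forces $\nu$ to be trivial, so $S'$ is disconnected, a contradiction. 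This global cover-theoretic argument avoids any infinitesimal lifting.

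Your local approach is in principle workable, but two steps are not justified as written. First, ``absorbing an $n$-th root of this unit into $v$'' requires the unit $u=v^n/f^*t$ to be an $n$-th power in $\Osh_U^*$; the obstruction to this lies in $H^1(U,\mu_n)\cong H^1(F,\zz/n\zz)$, which is typically nonzero, and the triviality of $\Osh_F(F)$ does not by itself kill it. (What one can show in characteristic~$0$ is that $\Osh_{nF}(F)$, and then $\Osh_{\hat U}(F)$, is trivial---because the kernel of $\Pic(nF)\to\Pic(F)$ is a vector space and hence torsion-free---but going from a trivialization $v$ to an exact equality $v^n=f^*t$ is a separate issue.) Second, your last sentence does not actually state a contradiction: a local map $v$ with reduced fiber $F$ is perfectly compatible with $f$ having the multiple fiber $nF$. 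The genuine contradiction, once $v^n=f^*t$ is in hand, is that each nearby fiber $f^{-1}(t_0)=\bigsqcup_i v^{-1}(\zeta_i)$ decomposes into $n$ disjoint nonempty pieces, so $f$ cannot have connected fibers---which is implicit in speaking of a ``multiple fiber''. The paper's argument sidesteps both issues, and it also makes transparent where the hypothesis ``base $=\cc$'' enters (cf.\ Remark~\ref{ch}).
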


\begin{proof}
We first show that $\mathcal L=\Osh_S(F)$ is not trivial.
Consider the closure of the graph of $f$ 
in $S\times \mathbb P^1$, let 
$\bar S$ be its minimal resolution and 
$\bar f:\bar S\to \mathbb P^1$ 
be the fibration given by the projection 
to the second factor. 
Assume by contradiction that $F\sim 0$ 
in $S$. Thus $F\sim \alpha F'$ in $\bar S$,
where $F'$ is a divisor with support contained 
in $\bar S-S=\bar f^{-1}(\infty)$ and $\alpha$ 
is a positive integer. 
Since $h^0(\bar S, nF)=2$ with $n>1$, then  
$h^0(\bar S, F)=1$, giving a contradiction.

The line bundle $\mathcal L$ thus defines a
non-trivial \'etale cyclic covering 
$\eta\colon S'\to S$.
By taking the Stein factorization of $f\circ\eta$
we get a commutative diagram
\[
 \xymatrix{
  S'\ar[r]^-\eta\ar[d]^-{f'} & S\ar[d]^-f\\
  B\ar[r]^-\nu & \cc,
 }
\]
where $f'$ is a morphism with connected 
fibers and $\nu$ is a finite map.
If $\mathcal L|_{F}=\Osh_F(F)$ is trivial, then 
$\nu$ is an
%Recall that $B$ is the relative spectrum
%of the sheaf $(f\circ\eta)_*\Osh_{S'}
%\cong f_*(\eta_*\Osh_{S'})\cong 
%f_*\bigoplus_{i=0}^{n-1}\Osh_S(-iF)$.
%
\'etale covering of $\mathbb \cc$, 
since the restriction of $\mathcal L$ 
to any fiber of $f$ is trivial.
Thus  $\nu$ is the trivial covering and $B$ 
has $n$ connected components,
a contradiction since $\eta$ is non-trivial.

Since $\mathcal L^{\otimes n}$ is trivial,
then clearly its restriction to $F$ is trivial.
This concludes the proof. 
\end{proof}

\begin{lemma}\label{triv}
Let $C$ be a non-hyperelliptic 
curve of genus $g = (k-1)^2>1$ 
which is complete intersection 
of a quadric cone with a degree $k$
surface of $\pp^3$. Then the divisor
$\Gamma$ of $C^{(2)}$, 
corresponding to the $g_k^1$ 
of $C$ defined by the ruling of 
the cone, is not semiample.
\end{lemma}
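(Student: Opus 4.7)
The plan is to argue by contradiction: assuming $\Gamma$ is semiample, I will derive a contradiction using the cone hypothesis. Semiampleness combined with $\Gamma^2=0$ (Lemma~\ref{gamma}) implies that for some $m>0$ the system $|m\Gamma|$ is base-point-free and defines a morphism whose image is a curve; taking its Stein factorization yields a fibration $f\colon C^{(2)}\to Y$ with connected fibers and $Y$ a smooth curve. Lemma~\ref{fib} applied to the numerical relation of Lemma~\ref{gamma} gives $Y\cong\mathbb P^1$. Since $\Gamma$ is irreducible with $\Gamma^2=0$, the negative semi-definiteness of the intersection form on fiber components forces $\Gamma$ to be the unique component of some fiber, which therefore has the form $n\Gamma$ for an integer $n\ge 1$. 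I will rule out both $n=1$ and $n>1$.

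For $n=1$ the system $|\Gamma|$ would be a pencil, and in particular $h^0(\Gamma)\ge 2$. I would contradict this by showing $h^0(\Osh(\Gamma))=1$ via the double cover $\pi\colon C^2\to C^{(2)}$. Using $\pi^*\Gamma=\tilde\Gamma\sim p_1^*L+p_2^*L-\Delta_C$, where $L$ is the $g_k^1$ line bundle, the short exact sequence
\[
 0\to\Osh_{C^2}(\tilde\Gamma)\to\Osh_{C^2}(p_1^*L+p_2^*L)\to\Osh_{\Delta_C}(L^{\otimes 2})\to 0
\]
identifies $H^0(\tilde\Gamma)$ with the kernel of the multiplication map $H^0(L)\otimes H^0(L)\to H^0(L^{\otimes 2})$, which equals $\bigwedge^2 H^0(L)$ and is one-dimensional since $h^0(L)=2$. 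The unique generator is $S_2$-invariant (after dividing by the canonical antisymmetric section of $\Osh(\Delta_C)$), so it descends to give the sole section of $\Osh(\Gamma)$ on $C^{(2)}$, contradicting the pencil.

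For $n>1$, Lemma~\ref{covering} forces $\Osh_\Gamma(\Gamma)$ to be non-trivially torsion. I will contradict this by showing that in the cone case $\Osh_\Gamma(\Gamma)$ is in fact trivial, via a degeneration argument. Consider a one-parameter smoothing of $Q$ to smooth quadrics $Q_t$, with associated curves $\Gamma_t,\Gamma_t'$ on $C_t^{(2)}$ coming from the two rulings. By Lemma~\ref{gamma} these are disjoint, so $\Osh_{\Gamma_t}(\Gamma_t)\cong\Osh_{\Gamma_t}(\Gamma_t-\Gamma_t')$; Proposition~\ref{pic} together with the Riemann--Hurwitz calculation $\imath^*(\Gamma_t-\Gamma_t')=2\eta_{C_t}$ identifies $\Gamma_t-\Gamma_t'\in\Pic^0(C_t^{(2)})$ with $2\eta_{C_t}\in\Pic^0(C_t)$. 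As $t\to 0$ the cone limit gives $\eta_{C_t}\to\eta_C=0$, so by flat specialization of line bundles in the family, $\Osh_\Gamma(\Gamma)$ is trivial, contradicting Lemma~\ref{covering}.

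The hardest step will be the last one: rigorously passing to the limit in a smoothing where the two ambient curves $\Gamma_t,\Gamma_t'$ collide into the single curve $\Gamma$, and confirming that the specialization of $\Osh_{\Gamma_t}(\Gamma_t)$ in the family is indeed $\Osh_\Gamma(\Gamma)$ rather than some other class compatible with the degenerate geometry.
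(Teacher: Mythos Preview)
Your proposal is correct and shares its core with the paper: both use the degeneration of smooth quadrics to the cone to control $\Osh_\Gamma(\Gamma)$, and both invoke Lemma~\ref{covering} to handle multiple fibers. The organization and the final contradiction differ, however. The paper first proves $\Osh_\Gamma(\Gamma)$ is trivial by noting that on the smooth fibers $\Osh_{\Gamma_t}(\Gamma_t')$ is trivial (disjointness, Lemma~\ref{gamma}) and specializes to $\Osh_\Gamma(\Gamma)$; then Lemma~\ref{covering} forces $\Gamma$ to be an honest fiber, and the contradiction comes from restricting $\varphi$ to a coordinate curve $H\cong C$: this gives $C$ a second pencil of degree $d\mid k-1$, hence coprime to $k$, so $C$ would be birational to a curve of bi-degree $(k,d)$ on $\pp^1\times\pp^1$ with genus strictly below $(k-1)^2$. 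You instead split into $n=1$ and $n>1$, and for $n=1$ you replace the gonality argument by the computation $h^0(\Gamma)=1$ via the kernel of $H^0(L)\otimes H^0(L)\to H^0(L^{\otimes 2})$ on $C^2$. That computation is tidy and, notably, does not use the cone hypothesis at all; the cone enters only in your $n>1$ case through the degeneration, exactly as in the paper. Your passage through $\Pic^0$ and $\eta_{C_t}$ for the degeneration is a slight detour compared with the paper's direct specialization of $\Osh_{\Gamma_t}(\Gamma_t')$, but both face the same limit issue you correctly flag as the delicate step.
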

\begin{proof}
We first show that the line bundle
$\Osh_\Gamma(\Gamma)$ is trivial.
Indeed let $Q_t\subseteq\pp^3\times
\mathbb A^1$ be a family of quadrics
whose central fiber $Q_0$ is the cone
containing $C$ and whose general fiber 
is a smooth quadric. Let $\mathcal D$
be a divisor of $\pp^3\times\mathbb A^1$
which cuts out on the general fiber $Q_t$
a smooth curve $C_t$ of type $(k,k)$ with two
simple $g_k^1$ and $C$ on $Q_0$.
The family $\mathcal C\to\mathbb A^1$
of curves $C_t$ gives a family 
$\mathcal C^{(2)}\to\mathbb A^1$
whose general fiber is $C_t^{(2)}$.
On any such fiber there are two 
curves $\Gamma_t$, $\Gamma_t'$ 
corresponding to the two $g_k^1$
on $C_t$. The line bundle 
$\Osh_{\Gamma_t}(\Gamma_t')$ is
trivial, by Lemma~\ref{gamma},
and its limit is $\Osh_\Gamma(\Gamma)$,
which proves the claim.

Assume now, by contradiction, that 
$\Gamma$ is semiample. 
Since $\Gamma^2=0$,
a multiple $n\Gamma$ defines a morphism
$f\colon C^{(2)}\to B$, where $B$ is a curve. Moreover,
after possibly normalizing, we can assume $B$ 
to be smooth. Now, let $f=\nu\circ\varphi$ be 
the Stein factorization of $f$, where 
$\varphi\colon C^{(2)}\to Y$ is a morphism with
connected fibers. By Lemma~\ref{covering}
and the fact that $\Osh_\Gamma(\Gamma)$ is
trivial we deduce that $\Gamma$ is a union 
of fibers of $\varphi$. Moreover 
both the hypotheses of Lemma~\ref{fib} are satisfied, 
thus $Y$ must be a smooth rational curve.
Let $H$ be the curve of
$C^{(2)}$ which is the image of the curve 
$\{p\}\times C$ via $\pi$.
The equality $\Gamma\cdot H = k-1$ implies
that $\varphi|_H$ is a covering of $Y$ whose degree 
$d$ divides $k-1$. 
Thus $C\cong H$ would admit two maps to 
$\pp^1$ of degrees $k$ and $d$, respectively.
Being the degrees coprime, the curve $C$ 
would be birational to a curve of bi-degree $(k,d)$
of $\pp^1\times\pp^1$, whose genus is smaller
than $g$, a contradiction.
\end{proof}

 \begin{remark}\label{rem1}
 If $D$ is a prime divisor on a projective 
 surface $X$ such that $|D|$
 has dimension $0$ then $D$ is semiample 
 if and only if $\dim |nD| > 0$ for some $n>1$.
 Indeed the ``only if'' part is obvious, while 
 the other implication follows from the fact
 that the fixed divisor of $|nD|$ is $mD$ 
 for some $m<n$ and thus the base locus of
 $|(n-m)D|$ is at most zero-dimensional and
 one concludes by Zariski-Fujita theorem
 ~\cite[Remark 2.1.32]{La} 
 \end{remark}

\begin{proof}[Proof of Theorem~\ref{teo-1}]
We show that $(i)\Rightarrow (ii)$ holds.
Assume that the extended canonical ring
$R := R(\Delta,K)$ is finitely generated.
We begin to show that any divisor 
whose class is in $\Nef(\Delta,K)$ 
is semiample. By Proposition~\ref{ray-1}
it suffices to show that $2K+(5-2k)\Delta$
is semiample, or equivalently that the divisor
$\Gamma$ defined by a $g_k^1$ of $C$
is semiample, by Lemma~\ref{gamma}.
Let $f_1,\dots,f_r$ be a minimal system of 
homogeneous generators of $R$
with respect to the $\zz^2$-grading
and let $w_i := \deg(f_i)\in\zz^2$ be
the degree of $f_i$ for any $i$.
Let $f_1\in R$ be a generator whose 
degree belongs to the ray generated by 
$[\Gamma]$ and let $D$ be a
very ample divisor of the form $a\Gamma+b K$, with $a,b>0$
whose class $w$ lies in the interior of the cone

\begin{minipage}{0.5\textwidth}
\[
 \bigcap_{i=2}^r{\rm cone}(w_1,w_i)
 \cap {\rm cone}(w_1,[K])
 \]
\end{minipage}%%% to prevent a space
\begin{minipage}{0.5\textwidth}
\begin{center}
 \begin{tikzpicture}[scale=0.65]
  \draw[-,thick] (0,0) -- (2,0) node[below]{$w_r$};
  \draw[-,thick] (0,0) -- (-2.2,1.1) node[left] {$w_1$};
  \draw[-,color=blue] (0,0) -- (0,1.8); 
  \node[right] at (-0.6,2.2){$[K]$};
  \foreach \x/\y in {-1/1.5,-1.5/1.3,-0.5/1.7} \draw[-,color=blue] (0,0) to (\x,\y);
  \foreach \x/\y in {1/1.5,1.5/1.3, 1.8/0.8} \draw[-,color=blue] (0,0) to (\x,\y);
  \fill[black] (0,0) circle (2pt);
  \fill[black] (2,0) circle (2pt);
    \fill[black] (-2.2,1.1) circle (2pt);
      \fill[blue] (-1,1.5) circle (2pt);
        \fill[blue] (-1.5,1.3) circle (2pt);
          \fill[blue] (-0.5,1.7) circle (2pt);
            \fill[blue] (1,1.5) circle (2pt);
              \fill[blue] (1.5,1.3) circle (2pt);
                \fill[blue] (1.8,0.8) circle (2pt);
                  \fill[blue] (0,1.8) circle (2pt);
\node[above] at (-1,1.5) {};
  \node[above, red] at (-1.7,1.3) {$w$};
   \node[above] at (-1,1.5) {$w_2$};
     \node[above] at (1.7,1.3) {$w_i$};
\end{tikzpicture}
\end{center}
\end{minipage}

\noindent If $\Gamma$ is not semiample, then
any section of $R_w = H^0(C^{(2)},D)$
is divisible by $f_1$, a contradiction.
Thus we showed that $\Gamma$ is semiample
and as a consequence of Lemma~\ref{triv}
the curve $C$ has two $g_k^1$. We denote
the corresponding curves of $C^{(2)}$ by 
$\Gamma$ and $\Gamma'$.
A multiple of $\Gamma$
defines a morphism $f\colon C^{(2)}\to B$
onto a smooth curve $B$ whose Stein factorization is 
the following 
\[
 \xymatrix{
 C^{(2)}\ar[r]^{\varphi}\ar[rd]_{f} & Y\ar[d]\\
 & B.
 }
\]
Two fibers of $\varphi$ are $n\Gamma$ and $m\Gamma'$
for some positive rational numbers $n,m$. Since
$\Gamma$ is numerically equivalent to $\Gamma'$ 
by Lemma~\ref{gamma}, then $n=m$. 
Moreover by Lemma~\ref{fib} the curve $Y$
is rational, so that $n\Gamma$ is linearly equivalent
to $n\Gamma'$. This implies that the class
of $\Gamma-\Gamma'$ is torsion non-trivial
in $\Pic^0(C^{(2)})$
and thus the same holds for 
\begin{align*}
 2\eta_C 
 & =
 \text{ramification of $g_k^1$ - ramification of ${g_k^1}'$}\\
 & =
 \imath^*(\Gamma-\Gamma').
\end{align*}
We now show that $(ii)\Rightarrow (i)$ holds.
First of all observe that if $\eta_C$ is torsion
non-trivial, then the same holds for
$\Gamma-\Gamma'$ by the above equalities
and the fact that $\imath^*$ is an isomorphism.
In this case $n\Gamma\sim n\Gamma'$ for
some positive integer $n$ and this implies
that $|n\Gamma|$ is base point free, being
$\Gamma^2=0$. Thus $\Gamma$ is semiample.
Hence, if $L\subseteq\zz^2$ is the submonoid
generated by the integer points of the cone 
${\rm cone}(2K+3\Delta,2K+(5-2k)\Delta)$,
the following subalgebra 
\[
  S := \bigoplus_{(a,b)\in L}H^0(C^{(2)},a\Delta+bK)
\]
of $R$ is finitely generated 
by~\cite[Lemma 4.3.3.4]{ADHL}.
The homogeneous elements of $R$ 
which do not belong to $S$ are sections
of Riemann-Roch spaces 
$H^0(C^{(2)},D)$ with $D\cdot\Delta < 0$.
Hence any such section is divisible by
the generator $f_\Delta$ of $R$ which is a defining
section for $\Delta$. Thus we conclude that 
$R$ is generated by $S$ and $f_\Delta$
and the statement follows.
\end{proof}

\begin{remark}
\label{ch}
Over an algebraically closed field $\kk$ of positive 
characteristic the statement of Theorem~\ref{teo-1}
must be modified as follows: the algebra
$R(\Delta,K)$ is finitely generated if and 
only if $\eta_C$ is torsion. Indeed if $\eta_C$ 
is trivial, then $\Osh_\Gamma(\Gamma)$ is trivial
as well as shown in the proof of Lemma~\ref{triv},
thus $\Gamma$ is semiample by~\cite[Theorem 0.2]{Ke}.
Moreover, if $\kk$ is the algebraic closure
of a finite field, then $\eta_C$ is always
torsion and thus $R(\Delta,K)$ is always 
finitely generated.

The conclusion of Lemma~\ref{covering}
is no longer true in positive characteristic. Indeed in
this case the algebraic fundamental group of the
affine line $\kk$ is not trivial and thus there is no
contradiction. For example, in characteristic $p>0$,
if $C$ is the curve of $\kk^2$ defined by the equation 
$x_1^p-x_1=x_2$, then the projection onto the second
factor defines a non-trivial \'etale covering of $\kk$.
\end{remark}

\begin{remark}
We observe that the locus of smooth curves of genus
$(k-1)^2 > 1$ which admit two $g_k^1$ has one component
of maximal dimension which consists of curves of 
type $(k,k)$ on a smooth quadric $Q$. 
Indeed by~\cite{AC} the only
other component which could have bigger dimension 
would consist of curves $C$ admitting an involution. 
A parameter count shows that for our curves this 
component has smaller dimension.

Moreover any smooth curve $C$ of type
$(k,k)$ on $Q$ admits exactly two $g_k^1$.
Indeed let $S = \{p_1,\dots,p_k\}$ be a set of
$k$ distinct points with $p_1+\dots+p_k\in g_k^1$. 
By the Riemann-Roch theorem $S$ is in Cayley-Bacharach 
configuration with respect to the curves of 
type $(k-2,k-2)$. It follows that all the 
points of $S$ are collinear. Indeed, let
$\ell$ be the line through the first two points
$p_1,p_2$, let $H$ be a general hyperplane 
which contains $\ell$ and let $q\in S\setminus\{p_1,p_2\}$.
Take a union $\Lambda$ of $k-3$
hyperplanes through all the points of 
$S\setminus\{p_1,p_2,q\}$ and such that
$q\notin\Lambda$.
Then $H\cup \Lambda$ cuts out on $Q$ 
a curve of type
$(k-2,k-2)$ which, by the Cayley-Bacharach
configuration, must pass through $q$.
Hence $q\in H$ and by the generality
assumption on $H$ we deduce $q\in\ell$.
\end{remark}

%----------------------------------------------------------------------------------

\section{The grid family}
\label{grid-fam}

In this section we study families of curves of type $(k,k)$
on a smooth quadric $Q = \pp^1\times\pp^1$
such that the class of the difference of the two
$g_k^1$ induced by the two rulings is 
$n$-torsion. We prove that $n\geq k$
and construct the family with $n=k$.

\begin{definition}
\label{grid}
Given two effective 
divisors $L_1$ and $L_2$ of $Q$ of type
$(k,0)$ and $(0,k)$ respectively, the {\em grid 
linear system} defined by $L_1$ and $L_2$ is 
the linear system of curves of $Q$ of 
bi-degree $(k,k)$ which pass through the 
complete intersection $L_1\cap L_2$.
The {\em grid family} 
\[
 \mathcal G_k
 \subseteq\mathcal F_k
\]
is the family 
of all curves of type $(k,k)$ which belong to some 
grid linear system.
\end{definition}
Observe that if $C$ is a smooth curve in $\mathcal G_k$
and $\eta_C\in \Pic^0(C)$ is the class of the difference of the 
two $g_k^1$ cut out by the two rulings then $\eta_C^{\otimes k}$
is trivial. This justifies the inclusion
$\mathcal G_k\subseteq\mathcal F_k^{\rm tor}$.
Any curve $C$ in $\mathcal G_k$
admits an equation of the form
\begin{equation}
\label{equ-fam}
 f_1(x_0,x_1)g_2(y_0,y_1)+g_1(x_0,x_1)f_2(y_0,y_1)
 =
 0,
\end{equation}
where $f_1, f_2, g_1$ and $g_2$ are homogeneous
polynomials of degree $k$. Indeed it is enough to
prove the claim for a curve $C$ in a grid family
where both $L_1$ and $L_2$ consist of
distinct lines and then conclude by specialization 
that the same holds for any
curve $C$ of $\mathcal G_k$.
Let $h=0$ be an equation for $C$ and
let $f_i = 0$ be an equation for $L_i$, for $i=1,2$.
By the equality
\[
 V(h,f_1) = V(f_1,f_2),
\]
the fact that the two ideals $(h,f_1)$ and $(f_1,f_2)$ are both
radical and saturated with respect to the
irrelevant ideal $(x_0,x_1)\cap (y_0,y_1)$ of $Q$,
we deduce that the equality $(h,f_1) = (f_1,f_2)$
holds. The claim follows since $h$ has bi-degree $(k,k)$.
Observe that the general element of $\mathcal G_k$ is
irreducible and smooth.

\begin{proposition}
The {\em grid family} $\mathcal G_k$
has dimension $4k-1$.
\end{proposition}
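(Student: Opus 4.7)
The plan is to realize $\mathcal{G}_k$ as the projectivized image of a natural parametrization and compute its dimension via a parameter count. Set $V := H^0(\pp^1, \Osh(k))$, so that $\dim V = k+1$ and $V \otimes V \simeq H^0(Q, \Osh(k,k))$. By Equation~\eqref{equ-fam}, every curve of $\mathcal{G}_k$ is the zero locus of some $h = f_1 g_2 + g_1 f_2$, so $\mathcal{G}_k$ is the projectivization of the image of
\[
\mu \colon V^{\oplus 4} \longrightarrow V \otimes V, \qquad (f_1, g_1, f_2, g_2) \longmapsto f_1 \otimes g_2 + g_1 \otimes f_2.
\]
Since the source has dimension $4(k+1)$, it is enough to show that the generic fiber of $\mu$ has dimension $4$, which gives $\dim \mathcal{G}_k = 4(k+1) - 4 - 1 = 4k-1$.

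To produce a $4$-dimensional family of tuples in each fiber, I would point to the natural $\operatorname{GL}_2$-action on the source given by $(f_1, g_1)^T \mapsto A(f_1, g_1)^T$ together with the unique compensating action on $(f_2, g_2)^T$ that preserves $\mu$ (the formulas for the compensating action are obtained by matching coefficients in $f_1 g_2 + g_1 f_2$). On the open locus where both $\{f_1, g_1\}$ and $\{f_2, g_2\}$ are linearly independent the action is free, so each orbit has dimension $4$.

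The main step, and the only non-bookkeeping part of the argument, is to check that the generic fiber of $\mu$ is exactly one such $\operatorname{GL}_2$-orbit. For this I would note that $\operatorname{Im}(\mu)$ is contained in the locus of tensors of rank $\leq 2$ in $V \otimes V$, with a generic point having rank exactly $2$ (consistent with the observation made before the statement that the general element of $\mathcal{G}_k$ is irreducible and smooth). Viewing such an $h$ as a $(k+1)\times(k+1)$ matrix of rank $2$, its column and row spaces $U_1, U_2 \subset V$ are canonical $2$-dimensional subspaces, and any decomposition $h = f_1 \otimes g_2 + g_1 \otimes f_2$ necessarily has $\{f_1, g_1\}$ a basis of $U_1$ and $\{g_2, f_2\}$ a basis of $U_2$; hence any two such decompositions differ precisely by the $\operatorname{GL}_2$-ambiguity described above. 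This identifies the generic fiber of $\mu$ with a single $\operatorname{GL}_2$-orbit of dimension $4$, yielding the claimed formula $\dim \mathcal{G}_k = 4k-1$. The only subtle point is the uniqueness of the rank-$2$ decomposition up to $\operatorname{GL}_2$, which is elementary linear algebra once one passes to matrices.
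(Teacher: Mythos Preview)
Your argument is correct and is conceptually the same as the paper's: both identify (the closure of) $\mathcal G_k$ with the locus of rank $\le 2$ tensors in $V\otimes V$, i.e.\ with the secant variety of the Segre embedding $\pp^k\times\pp^k\hookrightarrow\pp^{k^2+2k}$. The only difference is in how the dimension of this secant variety is obtained. The paper simply cites \cite[Theorem 1.4]{CS} for $\dim{\rm Sec}(\mathcal S)=4k-1$, whereas you compute it directly by the elementary linear-algebra observation that a rank-$2$ tensor has a decomposition $f_1\otimes g_2+g_1\otimes f_2$ unique up to the $\operatorname{GL}_2$-ambiguity, giving generic fiber dimension $4$ for your map $\mu$. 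Your route is thus more self-contained and avoids the external reference, at the cost of spelling out the standard fact that the secant variety of $\pp^k\times\pp^k$ is not defective; the paper's route is shorter but relies on a black box.
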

\begin{proof}

The projectivization of the set of homogeneous polynomials of
bidegree $(k,k)$ of type $f(x_0,x_1)g(y_0,y_1)$ can be identified 
with the image $\mathcal S$ of the Segre embedding of 
$\pp^k\times\pp^k\to\pp^N$,
where $N={k^2+2k}$.
Thus, a curve in $\mathcal G_k$ can be identified 
with a point of the $1$-secant variety ${\rm Sec}(\mathcal S)$
of $\mathcal S$.
By~\cite[Theorem 1.4]{CS} the dimension of
${\rm Sec}(\mathcal S)$ is $4k-1$ and the 
statement follows.
\end{proof}

\begin{lemma}
\label{composed}
Let $C\in\mathcal F_k$ and let $D$ be a divisor
of $C$ cut out by one of the two rulings. Then,
for any $1\leq n\leq k-1$, the linear system $|nD|$ 
is composed with the pencil $|D|$.
\end{lemma}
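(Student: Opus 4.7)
The plan is to show the dimension count $h^0(C, nD) = n+1$ for $1 \leq n \leq k-1$, which forces $|nD|$ to coincide with the sub-linear system $n|D|$ consisting of $n$-fold sums of divisors from $|D|$, hence to be composed with the pencil $|D|$.

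First, I would observe that $D$ is cut out on $C$ by a divisor of bi-degree $(1,0)$ on $Q=\pp^1\times\pp^1$, so $nD$ is cut out by a divisor of bi-degree $(n,0)$. Tensoring the ideal sheaf sequence of $C$ (of bi-degree $(k,k)$) with $\Osh_Q(n,0)$ yields
\[
0 \to \Osh_Q(n-k,-k) \to \Osh_Q(n,0) \to \Osh_C(nD) \to 0.
\]
I would then take the associated long exact sequence in cohomology on $Q$.

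Next, I would compute the relevant cohomology of line bundles on $Q$ via the Künneth formula. The term $H^0(Q,\Osh_Q(n-k,-k))$ vanishes because the second factor $H^0(\pp^1,\Osh(-k))$ does. For $H^1(Q,\Osh_Q(n-k,-k))$, Künneth splits it into two summands: one involves $H^0(\pp^1,\Osh(n-k))$, which vanishes since $n-k<0$ in our range, and the other involves $H^0(\pp^1,\Osh(-k))$, which vanishes as well. Hence $H^1(Q,\Osh_Q(n-k,-k))=0$, and the restriction map
\[
H^0(Q,\Osh_Q(n,0)) \longrightarrow H^0(C,\Osh_C(nD))
\]
is an isomorphism. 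Since $h^0(Q,\Osh_Q(n,0))=n+1$, we conclude $h^0(C,nD)=n+1$.

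Finally, since $|D|$ is a pencil, the sub-linear system $n|D|$ of $|nD|$ parametrized by $\operatorname{Sym}^n(\pp^1)=\pp^n$ has dimension exactly $n$. The inequality $\dim|nD|\leq n$ just established, together with the inclusion $n|D|\subseteq|nD|$, forces the equality $|nD|=n|D|$, which is the very definition of $|nD|$ being composed with the pencil $|D|$. There is no real obstacle here; the only point that deserves care is verifying the Künneth vanishing in the precise range $1\le n\le k-1$, and making explicit the elementary observation that dimension matching is enough to conclude composition with the pencil.
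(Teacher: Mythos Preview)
Your proof is correct and follows essentially the same strategy as the paper: reduce to showing $h^0(C,nD)=n+1$ via the ideal sheaf sequence $0\to\Osh_Q(n-k,-k)\to\Osh_Q(n,0)\to\Osh_C(nD)\to 0$. The only minor difference is in the cohomology bookkeeping: you use K\"unneth to show $H^1(Q,\Osh_Q(n-k,-k))=0$ directly, so the restriction on $H^0$ is an isomorphism, whereas the paper instead uses the vanishing of the higher cohomology of $\Osh_Q(n,0)$ together with Serre duality on $Q$ to compute $h^1(C,nD)=(k-n-1)(k-1)$ and then recovers $h^0(C,nD)=n+1$ from Riemann--Roch on $C$. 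Your route is slightly more direct for the purpose at hand; the paper's has the side benefit of recording $h^1(C,nD)$ explicitly.
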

\begin{proof}
To prove the statement it is enough to show that
$h^0(C,nD) = n+1$ for $1\leq n\leq k-1$. 
Let $Q = \pp^1\times\pp^1$. Without loss of generality
we can assume that $D$ is cut out by the first ruling
of $Q$, so that we have the following exact sequence 
of sheaves
\[
 \xymatrix@1{
  0\ar[r] 
  &
  \Osh_Q(-k+n,-k)\ar[r]
  &
  \Osh_Q(n,0)\ar[r]
  &
  \Osh_C(nD)\ar[r]
  &
  0.
 }
\]
Taking cohomology, using the vanishing of the higher
cohomology groups of the middle sheaf, the Serre's 
duality theorem and the hypothesis on $n$ we deduce 
the following equalities: 
\[
h^1(C,nD) = h^0(Q,\Osh_Q(k-n-2,k-2))= (k-n-1)(k-1).
\]
By the adjunction formula $C$ has genus $(k-1)^2$.
Thus by the above and the Riemann-Roch formula 
we conclude
\[
 h^0(C,nD)
 =
 nk+1-(k-1)^2+(k-n-1)(k-1)
 =
 n+1
\]
and the statement follows.
\end{proof}

\begin{proposition}
\label{torsion}
Let $C$ be a smooth curve of  type
$(k,k)$ on $Q$ and let $\eta_C$
be the class of the difference of the two $g_k^1$. 
Then $\eta_C$ has order $\geq k$ and the 
following are equivalent:
\begin{enumerate}
\item
$\eta_C$ has order $k$;
\item
the curve $C$ belongs to the grid family $\mathcal G_k$.
\end{enumerate}
\end{proposition}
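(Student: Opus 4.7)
The plan is to prove the proposition in three steps: (a) $n\eta_C \neq 0$ for every $1 \le n \le k-1$; (b) $C \in \mathcal G_k \Rightarrow k\eta_C = 0$; (c) $k\eta_C = 0 \Rightarrow C \in \mathcal G_k$. Together, (a) and (b) force $\eta_C$ to have order exactly $k$ when $C \in \mathcal G_k$, while (a) and (c) complete the equivalence.

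For (a), I would argue by contradiction: assume $nD_1 \sim nD_2$ with $1 \le n \le k-1$, where $D_1, D_2$ are fibres of the two rulings. Choose a generic $D_2 = H_s \cap C$ consisting of $k$ distinct points, none of which is a total ramification point of the projection $C \to \pp^1$ associated to the first ruling. Since $nD_2 \in |nD_1|$ and $|nD_1|$ is composed with $|D_1|$ by Lemma~\ref{composed}, there exist distinct lines $V_{t_1},\dots,V_{t_r}$ in the first ruling and positive integers $m_i$ with $\sum m_i = n$ such that $nD_2 = \sum_i m_i (V_{t_i} \cap C)$ as divisors on $C$. The support of each $V_{t_i} \cap C$ must lie in $\mathrm{supp}(nD_2) \subseteq H_s$, and $V_{t_i} \cap H_s$ is the single point $(t_i, s)$, forcing $V_{t_i} \cap C = k(t_i,s)$: a total ramification fibre lying on $H_s$, contradicting the choice of $s$.

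For (b), write $h = f_1 g_2 + g_1 f_2$ for the equation of $C$, with $f_1, g_1$ of bi-degree $(k,0)$ and $f_2, g_2$ of bi-degree $(0,k)$. From $f_1 g_2 = -g_1 f_2$ on $C$, the rational function $f_1/f_2$ on $Q$ restricts to a well-defined rational function on $C$ with divisor $L_1\cdot C - L_2\cdot C$, where $L_i = V(f_i)$. Hence $L_1\cdot C \sim L_2\cdot C$ in $\Pic(C)$, and since $L_1\cdot C \in |kD_1|$ and $L_2\cdot C \in |kD_2|$, one concludes $kD_1 \sim kD_2$, that is $k\eta_C = 0$.

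The main obstacle is step (c). Assuming $kD_1 \sim kD_2$, fix a rational function $\phi$ on $C$ with $(\phi) = kD_1 - kD_2$, yielding an isomorphism $\Osh_C(kD_2) \xrightarrow{\cdot \phi} \Osh_C(kD_1)$, and consider the linear map
\[
\Psi \colon H^0(Q, \Osh_Q(k,0)) \oplus H^0(Q, \Osh_Q(0,k)) \longrightarrow H^0(C, \Osh_C(kD_1)), \quad (s_1, s_2) \mapsto s_1|_C + \phi\,s_2|_C.
\]
The ideal sequence $0 \to \Osh_Q(0,-k) \to \Osh_Q(k,0) \to \Osh_C(kD_1) \to 0$ yields $h^0(\Osh_C(kD_1)) = 2k$, so $\dim\ker\Psi \ge 2(k+1) - 2k = 2$. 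Pick two linearly independent elements $(s_1, s_2), (s_1', s_2') \in \ker\Psi$; the kernel condition reads $(s_1/s_2)|_C = -\phi = (s_1'/s_2')|_C$, so the bi-degree $(k,k)$ polynomial $s_1 s_2' - s_1' s_2$ vanishes on $C$ and must equal $c\,h$ for some $c \in \cc$. A rank-one tensor argument in $H^0(Q, \Osh_Q(k,0)) \otimes H^0(Q, \Osh_Q(0,k)) \cong H^0(Q, \Osh_Q(k,k))$ rules out $c = 0$, since this would force $(s_1, s_2)$ and $(s_1', s_2')$ to be proportional. Hence $c \ne 0$ and $h = c^{-1}(s_1 s_2' - s_1' s_2)$ is manifestly a grid equation, exhibiting $C \in \mathcal G_k$. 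The delicate point is the interplay between the cohomological dimension bound, just sufficient to yield two independent kernel elements, and the bi-graded structure of the coordinate ring, which converts them into the grid factorisation of $h$.
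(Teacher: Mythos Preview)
Your proof is correct and follows essentially the same approach as the paper: part (a) rests on Lemma~\ref{composed} exactly as in the paper (you derive the contradiction via total ramification rather than the paper's pigeonhole count $nk>n^2$, but the substance is identical), and part (c) uses the same cohomological computation $h^0(C,kD_1)=2k$ together with the same dimension count---your $2$-dimensional kernel of $\Psi$ is precisely the cone over the paper's intersection $\mathcal H_1\cap\mathcal H_2\subseteq\pp^{2k-1}$, and your explicit factorisation $h=c^{-1}(s_1s_2'-s_1's_2)$ just unwinds what a point of that intersection means. One cosmetic remark: in (b) and (c) the expressions $f_1/f_2$ and $s_1/s_2$ are not rational functions on $Q$ (the bidegrees differ), but your actual computations only use the identity $(s_1s_2'-s_1's_2)|_C=0$ in $H^0(C,\Osh_C(kD_1+kD_2))$, which is correct as written.
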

\begin{proof}

We first show that $\eta_C$ has order $\geq k$.
Let $D_1$ be a divisor in the first $g_k^1$ and let  
$D_2$ be a divisor in the second $g_k^1$. 
Let $n<k$ be a positive integer.
By Lemma~\ref{composed} the linear system
$|nD_i|$ is composed with $|D_i|$ for any $i$. 
Thus the linear equivalence $nD_1\sim nD_2$ would 
imply that for any set of $n$ lines of the first ruling 
there are $n$ lines of the second ruling which cut 
out the same set of points on $C$.  This cannot be
since such set of $nk$ points would lie on a grid 
with $n^2$ points.
This proves the claim.

We now show that $(i)\Rightarrow (ii)$ holds.
By the exact sequence of sheaves
\[
 \xymatrix@1{
  0\ar[r] 
  &
  \Osh_Q(0,-k)\ar[r]
  &
  \Osh_Q(k,0)\ar[r]
  &
  \Osh_C(kD_1)\ar[r]
  &
  0
 }
\]
the fact that $h^0 = 0$, $h^1=k-1$ for the first sheaf
and $h^0 = k+1$, $h^1 = 0$ for the second sheaf  we
get the equality $h^0(C,\Osh_C(kD_1)) = 2k$. 
By hypothesis $|kD_1| = |kD_2|$ holds and by the above 
calculation the linear system is a projective space of dimension
$2k-1$. Let $\mathcal H_i\subseteq |kD_1|\cong\pp^{2k-1}$ 
be the projectivization of the $k$-th symmetric
power of the vector space $H^0(C,D_i)$.
Since both $\mathcal H_1$ and $\mathcal H_2$
have dimension $k$, their intersection 
$\mathcal H_1\cap\mathcal H_2$ is at least
one-dimensional. A point of this intersection 
corresponds to a divisor $D$ which is cut out 
on $C$ by $k$ lines of the first ruling
and by $k$ lines of the second ruling. 
This proves the claim.

The implication $(ii)\Rightarrow (i)$ is obvious.
\end{proof}

%----------------------------------------------------------------------------------

\section{Density of the torsion locus}
\label{density}

We define $\mathcal F_k$ 
to be the open subset of the Hilbert scheme of the smooth
curves of type $(k,k)$ of $\pp^1\times\pp^1$.
Let $\pi\colon \mathcal C\to\mathcal F_k$ be the universal
family, let $\mathcal H = R^1\pi_*\cc$ be the associated
Hodge bundle whose fiber
over a point $C\in\mathcal F_k$ is the
cohomology group $H^1(C,\cc)$ and
let $\mathcal H^{1,0}$ be the subbundle 
of $\mathcal H$ whose fiber over
$C$ is $H^{1,0}(C)$. We recall that
the jacobian family $\mathcal J\to\mathcal F_k$
can be defined as 
\begin{equation}
\label{jac}
 \mathcal J
 =
 \frac{\mathcal H}{\mathcal H^{1,0}+R^1\pi_*\zz}.
\end{equation}
Given a point $t\in\mathcal F_k$ we denote by
$C_t$ the fiber of $\pi$ over $t$ and 
by $\nu_t = \mathcal L_1\otimes\mathcal L_2^{-1}
\in J(C_t)$ the class of the difference 
of the two $g_k^1$ cut out by the two rulings.
This gives a normal function $\nu\colon\mathcal F_k
\to \mathcal J$ defined by $t\mapsto\nu_t$.
We now consider a $\mathcal C^\infty$ trivialization 
$\varphi$
of the jacobian family over an open, simply connected
subset $\mathcal U$ of $\mathcal F_k$:
\[
 \xymatrix{
 \mathcal J
  \ar[r]^-\varphi & 
  \mathcal U\times\dfrac {R^1\pi_\ast\mathbb R} {R^1\pi_\ast \mathbb Z}
  = 
  \mathcal U\times\mathbb T,
 }
\]
where $\mathbb T$ is the real torus 
$\rr^{2g}/\zz^{2g}$.
We let ${\rm pr}_2\colon\mathcal U\times\mathbb T
\to\mathbb T$ be the projection on the second factor
and let 
\[
 \eta\colon{\mathcal  U}\to\mathbb T
 \qquad
 t\mapsto {\rm pr}_2(\varphi(\nu(t))).
\]
We will show that, for any $k\geq 3$, the image 
of $\eta$ contains a non-empty open subset.
Our strategy will be to prove that 
the fibers of $\eta$ are complex subvarieties 
of $\mathcal U$, that $\eta$ has a fiber 
of the expected real dimension 
$8k-2 = \dim \mathcal F_k - \dim\mathbb T$
and we conclude by Proposition~\ref{fiber}
(see also the argument in the proof of~\cite[Proposition 3.4]{CPP}).

\begin{proposition}
\label{fiber}
Let $V\subseteq X$ be an inclusion of complex manifolds, 
$Y$ be a real manifold and 
$\eta\colon X\to Y$ be a $\mathcal C^\infty$
map whose fibers are complex subvarieties
of $X$. Assume that there is a point $p\in V$
such that the following hold:
\begin{enumerate}
\item
the differential $d\eta_p$ is surjective,
\item
the fiber of the restriction $\eta|_V$ over $\eta(p)$
has dimension $\dim V - \dim Y\geq 0$.
\end{enumerate}
Then $\eta(V)$ contains
a non-empty open subset of $Y$.
\end{proposition}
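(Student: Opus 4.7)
The plan is to prove that the restriction $\eta|_V$ is a submersion at some point near $p$; once this is established, the classical submersion theorem for $C^\infty$ maps guarantees that $\eta|_V$ sends a neighborhood in $V$ onto a neighborhood in $Y$, which immediately exhibits a non-empty open subset of $Y$ inside $\eta(V)$.

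First I would observe that since $d\eta_p$ is surjective and takes values in the tangent space $T_{\eta(p)}Y$ of dimension $\dim Y$, the rank of $d\eta$ is necessarily equal to $\dim Y$ throughout a neighborhood $U$ of $p$ in $X$. Thus $\eta|_U$ is a submersion, the fiber $F := \eta^{-1}(\eta(p))\cap U$ is a smooth real submanifold of $U$ of real codimension $\dim Y$, and its tangent space at every $q \in F\cap U$ is given by $T_qF = \ker d\eta_q$. Combined with the hypothesis that every fiber of $\eta$ is a complex subvariety of $X$, the smoothness of $F$ near $p$ forces $F$ to be a complex submanifold of $X$ in a neighborhood of $p$.

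Next I would analyze the fiber $W := V\cap F$ of $\eta|_V$ over $\eta(p)$. Since both $V$ and $F$ are complex submanifolds of $X$ near $p$, $W$ is a complex subvariety of $V$ there, and by hypothesis (ii) it has real dimension $\dim V-\dim Y$. The smooth locus of a complex subvariety is dense, and the condition that $d\eta_q$ is surjective is open in $q$; therefore I can choose a point $q \in W$ arbitrarily close to $p$ at which $W$ is smooth and $\eta$ is still a submersion. At such a $q$, the codimension of $W$ in $V$ equals the codimension of $F$ in $X$ (both equal $\dim Y$), so the intersection is transverse and $T_qV + T_qF = T_qX$.

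Finally, applying $d\eta_q$ to this equality and using both $T_qF = \ker d\eta_q$ and the surjectivity of $d\eta_q$ on $T_qX$, one obtains $d\eta_q(T_qV) = T_{\eta(q)}Y$, so $\eta|_V$ is a submersion at $q$, and the submersion theorem closes the proof. The only subtle point is that $W$ may fail to be smooth at $p$ itself, which is why one must slide to a nearby smooth point of $W$; this maneuver is available precisely because hypothesis (ii) supplies the expected dimension $\dim V - \dim Y$, ruling out an excessive drop that would have prevented transversality.
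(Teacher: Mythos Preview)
Your argument contains a genuine gap at the key step. You claim that if $W=V\cap F$ is smooth at $q$ and has the expected (co)dimension, then $V$ and $F$ meet transversally at $q$, i.e.\ $T_qV+T_qF=T_qX$. This implication runs the wrong way: transversality implies the expected dimension, but not conversely. A simple counterexample in $X=\cc^3$ with coordinates $(x,y,z)$ is $V=\{z=0\}$, $F=\{z=x^2\}$, realized as the fiber of $\eta(x,y,z)=z-x^2$ over $0\in\cc\cong\rr^2$. All hypotheses of the proposition are met at $p=0$: $d\eta_0=dz$ is surjective, and $W=V\cap F=\{x=z=0\}$ has real dimension $2=\dim_\rr V-\dim_\rr Y$. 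Every point of $W$ is smooth, yet at each $q=(0,y_0,0)$ one has $T_qV=T_qF=\{z=0\}$, so the intersection is nowhere transverse and $d(\eta|_V)_q=0$ along all of $W$. Your ``slide to a nearby smooth point of $W$'' does not rescue the argument, since in this example there is no point of $W$ at which transversality holds.

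The paper's proof avoids this pitfall by not attempting to prove transversality at all. Instead it reduces (by slicing $V$ with general complex hypersurfaces through $p$) to the case where $V\cap F$ is an isolated point, and then invokes the local constancy of the intersection number of $V$ with the fibers of $\eta$: since isolated intersections of complex analytic subvarieties always carry a positive local intersection number, nearby fibers $\eta^{-1}(q)$ must also meet $V$, forcing $\eta(V)$ to contain a neighbourhood of $\eta(p)$. This is precisely the ingredient your approach is missing; a purely differential-topological submersion argument cannot detect the positivity that comes from complex analyticity, which is what guarantees the conclusion even when the intersection is tangential.
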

\begin{proof}
Let $F$ be the fiber of $\eta$ over $\eta(p)$ 
and let $U$ be a coordinate neighbourhood of $p$
such that the restriction $\eta|_U$ is a submersion.
Let $d = \dim V - \dim Y$. After possibly 
intersecting $V$ with $d$ general smooth 
complex hypersurfaces passing through $p$
we can reduce to the case $d = 0$.
Thus, after possibly shrinking $U$ 
we can assume $F\cap V\cap U = \{p\}$.
For $q$ in a sufficiently small neighbourhood
$W$ of $\eta(p)$ the intersection number 
$\eta^{-1}(q)\cap V$ does not change
by~\cite[Pag.\,664]{GH}, so that the intersection is
non-empty. Then $\eta(V)$ contains $W$
and the statement follows.
\end{proof}

\begin{proposition}
The fibers of the map $\eta$ are complex varieties.
\end{proposition}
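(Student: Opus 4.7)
The plan is to prove that, in a neighbourhood of every $t_0\in\mathcal{U}$, the fiber $\eta^{-1}(\eta(t_0))$ is the zero locus of a holomorphic map with values in $\cc^g$, hence a complex analytic subvariety. The apparent obstacle is that $\varphi$, and therefore $\eta$, are only $\mathcal{C}^\infty$; the crux is that the classical holomorphicity of periods translates the real condition $\eta(t)=\eta(t_0)$ into a holomorphic one once one passes from the real trivialization to the Hodge bundle.

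First I would work on a simply connected polydisc $\mathcal{U}'\subset\mathcal{U}$ around $t_0$ on which the holomorphic quotient bundle $\mathcal{H}^{0,1}:=\mathcal{H}/\mathcal{H}^{1,0}$ is trivialized by a holomorphic frame $\epsilon_1,\dots,\epsilon_g$, and fix a flat $\zz$-basis $e_1,\dots,e_{2g}$ of $R^1\pi_\ast\zz$ on $\mathcal{U}$. Being flat for the Gauss-Manin connection, the $e_i$ are in particular holomorphic sections of $\mathcal{H}$, so their images in $\mathcal{H}^{0,1}$ are holomorphic and expand in the chosen frame as $e_i(t)=\sum_j P_{i,j}(t)\,\epsilon_j(t)$ with holomorphic functions $P_{i,j}$ (the classical period integrals). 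Because $\nu$ is a holomorphic section of $\mathcal{J}=\mathcal{H}^{0,1}/R^1\pi_\ast\zz$, after shrinking $\mathcal{U}'$ it lifts to a holomorphic section $\tilde\nu\colon\mathcal{U}'\to\mathcal{H}^{0,1}$ with holomorphic components $\tilde\nu_j(t)$ in the frame $\epsilon_j$.

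The trivialization $\varphi$ is induced by the $\rr$-linear identification $R^1\pi_\ast\rr\xrightarrow{\sim}\mathcal{H}^{0,1}$, so there exist unique real $\mathcal{C}^\infty$ functions $f_1,\dots,f_{2g}\colon\mathcal{U}'\to\rr$ with $\tilde\nu(t)=\sum_i f_i(t)\,e_i(t)$, and then $\eta(t)=(f_1(t),\dots,f_{2g}(t))\bmod\zz^{2g}$. Setting $v:=(f_1(t_0),\dots,f_{2g}(t_0))\in\rr^{2g}$, continuity of the $f_i$ forces, in a sufficiently small neighbourhood of $t_0$, the equality $\eta(t)=\eta(t_0)$ to be equivalent to $f_i(t)=f_i(t_0)$ for every $i$, that is, to the vanishing of
\[
 \Psi(t)\;:=\;\tilde\nu(t)\,-\,\sum_{i=1}^{2g}v_i\,e_i(t),
\]
whose components $\tilde\nu_j(t)-\sum_i v_i\,P_{i,j}(t)$ in the frame $\epsilon_j$ are holomorphic. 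Therefore $\eta^{-1}(\eta(t_0))$ is locally the zero locus of the holomorphic map $\Psi\colon\mathcal{U}'\to\cc^g$; since $t_0$ was arbitrary in its fiber, the whole fiber is a complex subvariety of $\mathcal{U}$. The only non-formal input is the holomorphicity of the $P_{i,j}$; the rest is bookkeeping passing between the real $\mathcal{C}^\infty$ and the holomorphic descriptions of the Jacobian fibration.
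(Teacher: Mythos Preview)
Your argument is correct and follows essentially the same approach as the paper: both exploit that the fiber condition $\eta(t)=x$, once lifted through the holomorphic normal function, becomes the condition that a holomorphic section of $\mathcal H$ lies in the Hodge subbundle $\mathcal H^{1,0}$ (equivalently, vanishes in the holomorphic quotient $\mathcal H^{0,1}$). The paper phrases this more abstractly via a commutative diagram and the inclusion $\mathcal H^{1,0}\subset\mathcal H$, while you unpack it in coordinates using a holomorphic frame of $\mathcal H^{0,1}$ and the period matrix, but the content is the same.
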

\begin{proof}
We restrict the Hodge and the jacobian bundle 
to the simply connected open subset $\mathcal U$
of $\mathcal F_k$,
where the family is topologically trivial.
Let $t_0\in \mathcal U$ be a distinguished  point
and let $C = C_{t_0}$ be the corresponding
curve. We construct the following
commutative diagram
\[
 \xymatrix{
  \mathcal U\ar@{=}[d]\ar[r]^-{\tilde\nu}
  & \mathcal H_{\mathcal U}\ar[r]^-\cong\ar[d]
  & \mathcal U\times H^1(C,\cc)\ar[d]\ar[r]
  & H^1(C,\cc)\ar[d]\\
  \mathcal U\ar[r]^-{\nu|_\mathcal U}
  & \mathcal J_{\mathcal U}\ar[r]^-\varphi
  & \mathcal U\times\mathbb T\ar[r]
  & \mathbb T
 }
\]
where $\tilde\nu$ is a lifting of the restriction
$\nu|_{\mathcal U}$ and the maps in the top row
are complex analytic morphisms.
Observe that, given $x\in \mathbb T$, we have 
\[
\eta^{-1}(x)=\{t\in\mathcal U: \tilde \nu(t)-\tilde x+c\in\mathcal H^{1,0}\},
\]
where the projection of $\tilde x$ to $\mathcal J$ is $x$ 
and $c\in H^1(C,\zz)$.
This implies that $\eta^{-1}(x)$ 
is a complex subvariety of $\mathcal U$ 
since $\mathcal H^{1,0}$ is a 
complex subbundle of $\mathcal H$.
\end{proof}

\begin{proof}[Proof of Theorem~\ref{teo-2}]
We will start proving that the image of the map $\eta$ 
contains a non-empty open subset of $\mathbb T$.
We consider the map
\[
 \phi\colon\mathcal J_{\mathcal U}\to \mathbb T
\]
which is composition of the trivialization $\varphi$
with the projection onto the second factor.
Observe that the differential of $\phi$ has
maximal rank at any point. By applying
Proposition~\ref{fiber} to the map
$\phi$, the subvariety 
$V = \nu|_{\mathcal U}(\mathcal U)$ of $\mathcal J$
and the fiber $\mathcal G_k$ of the restriction
$\phi|_V$ we deduce that $\eta(V)$ contains
a non-empty open subset.
Observe that the trivialization $\varphi$
can be taken real analytic, since it is
obtained by taking the real part of the 
complex analytic isomorphism coming
from the trivialization of the Hodge bundle.
By Sard theorem the set of non-critical values
of $\phi$ is an open subset in the analytic 
topology. Since $\phi$ admits an analytic continuation
on any simple arc in $\mathcal J$, its set
of non-critical points is dense in the analytic
topology.
We conclude by observing that the set
of torsion points of $\mathbb T$ is dense
and $\mathcal F_k^{\rm tor}$ is the preimage
of this set via $\eta = \phi\circ\nu$.
\end{proof}

%----------------------------------------------------------------------------------

\section{Hyperelliptic curves}
\label{hyp}

In this section we prove a density
theorem for hyperelliptic curves. This result 
has an independent interest and is proved in 
the spirit of Griffiths computations of the 
infinitesimal invariant~\cite{Gr}.
As an application to our problem, this result
provides an alternative proof for Theorem~\ref{teo-2}
in case $g=4$.

Let $\pi\colon\mathcal C\to\mathcal U$ be a versal family 
of hyperelliptic curves of genus $g > 1$, where $\mathcal U$
is simply connected of dimension $2g-1$.
We denote by $j\in\Aut(\mathcal C)$ the hyperelliptic
involution and by
$\mathcal J\to\mathcal U$ the jacobian
family. We consider the Abel-Jacobi map 
\[
 \nu\colon \mathcal C \to \mathcal J,\quad  
 x\mapsto\int_{j(x)}^x.
\]
If we take $\pi^\ast \mathcal J\to \mathcal C$ the pull-back 
of the Jacobian family on $\mathcal C,$ 
we may consider $\nu$ as a normal function.
As in Section~\ref{density} we consider a $\mathcal C^\infty$
trivialization of the jacobian family
$\mathcal J\cong \mathcal U\times \mathbb T$,
where $\mathbb T\cong J(C)$, to construct a map
\[
 \gamma\colon\mathcal C\to\mathbb T.
\]
\begin{theorem}
\label{diff}
If $p\in C$ is not a Weierstrass point, then the
differential of $\gamma$ at $p$ is surjective.
\end{theorem}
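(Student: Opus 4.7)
The plan is to decompose the differential $d\gamma_p$ using the tangent sequence $0 \to T_pC \to T_p\mathcal C \to T_{t_0}\mathcal U \to 0$ and show that the vertical and horizontal contributions together span $T_0\mathbb T$. First I identify the target: the $\mathcal C^\infty$ trivialization gives $T_0\mathbb T \cong H^1(C,\mathbb R)$, which is $\mathbb R$-linearly isomorphic to $H^{0,1}(C)$ via projection along $F^1 = H^{1,0}(C)$. Under this identification, both contributions to $d\gamma_p$ land in $H^{0,1}(C)$ and are $\mathbb C$-linear: the vertical by holomorphicity of $\nu$ restricted to the fiber, the horizontal by Griffiths' theory of infinitesimal invariants applied to the normal function.

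For the vertical contribution, the restriction of $\nu$ to $C_{t_0}$ is the map $x \mapsto AJ(x - j(x))$, whose derivative at $v \in T_pC$ pairs with $\omega \in H^0(C,\omega_C)$ as $\omega_p(v) - (j^*\omega)_p(v) = 2\omega_p(v)$, using $j^*\omega = -\omega$. Thus the vertical image $L_p \subset H^{0,1}(C)$ is the complex line Serre-dual to the evaluation functional $\mathrm{ev}_p$ on $H^0(C,\omega_C)$, with orthogonal complement $L_p^\perp = H^0(C, \omega_C(-p))$ of dimension $g-1$.

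The key computation is the horizontal Griffiths infinitesimal invariant $\delta\nu_p \colon T_{t_0}\mathcal U \to H^{0,1}(C)$. Working in a family $C_s \colon y^2 = f(x) + s g(x)$ with a $j$-preserving $\mathcal C^\infty$ trivialization, one computes $\dot\omega_i = -\tfrac{g}{2f}\omega_i$ as a meromorphic representative (with double poles at the Weierstrass points). Dualizing via Serre duality and the identification $T_{t_0}\mathcal U^* \cong H^0(C, \omega_C^{\otimes 2})_+$ (the $j$-invariant quadratic differentials of dimension $2g-1$), one expects the formula
\[
\delta\nu_p^\vee(\omega) = c\cdot \omega \cdot \omega'_{p,j(p)}, \qquad c \neq 0,
\]
where $\omega'_{p,j(p)}$ is the $j$-antiinvariant differential of the third kind on $C$ with simple poles of residues $\pm 1$ at $p$ and $j(p)$. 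For $\omega \in H^0(C,\omega_C(-p))$, the antiinvariance $j^*\omega = -\omega$ forces $\omega$ to vanish also at $j(p)$, so the simple poles of $\omega'_{p,j(p)}$ are cancelled and $\omega \cdot \omega'_{p,j(p)}$ defines a genuine element of $H^0(C, \omega_C^{\otimes 2})_+$.

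To conclude, I check that $\delta\nu_p^\vee$ restricted to $L_p^\perp = H^0(C, \omega_C(-p))$ is injective. The non-Weierstrass hypothesis, equivalent to $p \neq j(p)$, guarantees that $\omega'_{p,j(p)}$ exists as a nonzero meromorphic form; hence $\omega \cdot \omega'_{p,j(p)} = 0$ forces $\omega = 0$. Injectivity of the dual map gives surjectivity of the horizontal contribution modulo $L_p$, and combined with the vertical image being $L_p$ itself, we obtain that $d\gamma_p$ surjects onto $H^{0,1}(C) \cong T_0\mathbb T$. The main technical obstacle is establishing the multiplication formula for $\delta\nu_p^\vee$, which requires a careful period integral calculation handling the poles of $\dot\omega_i$ at Weierstrass points; a sanity check in genus $g=2$ takes the unique (up to scalar) $\omega = (x - x_0)dx/y \in H^0(C,\omega_C(-p))$ and explicitly computes $\omega \cdot \omega'_{p,j(p)} = y_0 \, dx^2/f$, confirming nonvanishing precisely when $p$ is not a Weierstrass point.
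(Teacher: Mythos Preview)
Your overall strategy matches the paper's: both split $d\gamma_p$ into a vertical piece (the Abel--Jacobi map within the fiber, handling $\omega$ with $\omega(p)\neq 0$) and a horizontal piece (variation across the family, handling $\omega$ vanishing at $p$). Your vertical computation is essentially the paper's $k=0$ case.

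The genuine gap is in the horizontal part. You assert the formula $\delta\nu_p^\vee(\omega)=c\cdot\omega\cdot\omega'_{p,j(p)}$ but do not prove it; you yourself flag this as ``the main technical obstacle'' and offer only a genus-$2$ sanity check. Even the statement carries an ambiguity: the differential of the third kind $\omega'_{p,j(p)}$ is only determined up to adding a holomorphic form, so $\omega\cdot\omega'_{p,j(p)}$ is a priori only defined modulo $\omega\cdot H^0(C,\omega_C)$, whereas $\delta\nu_p^\vee(\omega)$ should be a well-defined element of $H^0(C,\omega_C^{\otimes 2})_+$. This does not break your injectivity conclusion, since multiplication by any nonzero meromorphic form is injective on holomorphic forms; but it shows that the formula as written needs a normalization, and that the deferred computation is not merely bookkeeping.

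The paper sidesteps this entirely. Rather than compute the full dual map $\delta\nu_p^\vee$, it argues pointwise: given $\omega$ vanishing to order $k\geq 1$ at $p$, it constructs a specific Kodaira--Spencer class $\zeta_\omega=\partial(f_\omega)\in H^1(C,T_C)^j$ by choosing $f_\omega\in H^0(Z,\mathcal O_Z)$ (where $Z=\operatorname{div}\omega$) with $f_\omega(p)=1$, $f_\omega(j(p))=-1$, and $f_\omega\equiv 0$ on the rest of $Z$. It then realizes $\zeta_\omega$ as the Kodaira--Spencer class of an actual one-parameter hyperelliptic family, builds a closed $1$-form $\Theta$ on the total space with local expansion $\Theta^{(1,0)}|_C=\omega+f_\omega(z)\,dt+o(t)$, and evaluates the period integral directly to obtain $d\gamma_p(r'(0))\cdot\omega=2f_\omega(p)=2\neq 0$. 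This is exactly the ``careful period integral calculation'' you postpone, but executed for one well-chosen direction per $\omega$ rather than uniformly over $T_{t_0}\mathcal U$. The paper's route is less structural but fully self-contained; yours would be cleaner if the multiplication formula were established (it is in the spirit of the Collino--Pirola computations the paper cites), but as written the proposal stops short of a proof.
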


Our strategy is as follows. For any holomorphic
form $\omega\in H^0(C,\Omega_C)$ we show that
there is a curve $r(t)$ in $\mathcal C$ such that
$r(0) = p$ and $d\gamma_p(r'(0))\cdot\omega$
is non-zero. To this aim we produce $r(t)$
accordingly to the order $n$ of vanishing of $w$ 
at $p$. Since the divisor ${\rm div}(w)$ is $j$-invariant, 
then it is natural to consider $D = p+j(p)$.
We thus have a filtration 
\begin{equation}
\label{filt}
 H^0(C,\Omega_C)
 =
 L^0\supseteq L^1\supseteq...\supseteq L^{g-1}\supseteq L^g=0,
\end{equation}
where $L^k$ is the Riemann-Roch space 
$H^0(C,\Omega_C(-kD))$. Given $\omega\in L^k
\setminus L^{k+1}$, with $k>0$, we construct $\zeta = \partial(f)
\in H^1(C,T_C)^j$ as in Subsection~\ref{def}, where $f\in
H^0(Z,\Osh_Z)$. The one-dimensional family
\[
 \mathcal C_\zeta\to\Delta
\]
defined by $\zeta$, plus a choice of a smooth
section through the point $p$ in the family, defines a curve 
$r(t)$ in $\mathcal C$. We show that the family
is equipped with a $\mathcal C^\infty$
 $1$-form $\Theta$ such that the restriction 
of the $(1,0)$-part $\Theta^{1,0}$
to the central fiber $C$
admits a local expansion at $p$ of the
form $w + \tilde f(z)dt + o(t)$, where $z$ is a coordinate
in $C$, $\tilde f|_Z=f$ and $t\in\Delta$. We finally prove 
\[
 d\gamma_p(r'(0))\cdot\omega
 =
 \lim_{t\to 0}\frac{1}{t}\left(\int_{\Gamma_t}\Theta_t-\int_{\Gamma_0}\omega\right)
 =
 2f(p)\neq 0,
\]
where $\Gamma_t$ is a path between $r(t)$ 
and $j(r(t))$. When $k=0$ we choose $r(t)$
to be a path within the central fiber $C$,
we write $\omega$ locally as $h(z)dz$
and prove the equality
\[
 d\gamma_p(r'(0))\cdot\omega
 =
 \lim_{t\to 0}\frac{1}{t}\int_{p}^{r(t)}\omega
 =
 h(0)\neq 0.
\]

\subsection{Deformation of curves}
\label{def}
We recall first a result on deformation and on 
extension of line bundles 
which will be applied to hyperelliptic curves (see also~\cite{CP} and~\cite{R}).
Let $C$ be a smooth curve of genus $g>1$ and 
let $T_C$ and $\Omega_C$ be respectively the holomorphic 
tangent bundle and the canonical line bundle of $C$. 
Fix a non trivial $\omega\in H^0(C,\Omega_C)$ and 
let $Z$ be the canonical divisor associated to $\omega.$
The form $\omega$ defines the following exact sequence
\[
\xymatrix@1{
0\ar[r] & T_C\ar[r]^{\omega}& \Osh_C\ar[r]& \Osh_Z\ar[r]& 0.
}
\]
Passing to the long exact sequence in cohomology 
we obtain
\[
\xymatrix@C=20pt{
0\ar[r]& \cc\cong H^0(C,\Osh_C)\ar[r]& 
H^0(Z,\Osh_Z)\ar[r]^\partial& H^1(C,T_C)
\ar[r]^{\omega}& H^1(C,\Osh_C).
}
\]
Given an element $f\in H^0(Z,\Osh_Z)$
its image $\zeta=\partial(f)$ defines an
extension of $\Osh_C$ by $T_C$ via the
isomorphism $H^1(C,T_C)\cong 
{\rm Ext}^1(\Osh_C,T_C)$
\[
\xymatrix@1{
0\ar[r] & T_C \ar[r] & E_{\zeta}\ar[r] & \Osh_C\ar[r] & 0.
}
\]
Taking tensor product with $\Omega_C$ 
and recalling that $T_C$ is dual with $\Omega_C$
we get the following exact sequence
\begin{equation} \label{zeta}
\xymatrix@1{
0\ar[r] &\Osh_C\ar[r] & E_{\zeta}\otimes \Omega_C\ar[r] &
 \Omega_C\ar[r]& 0,
 }
 \end{equation}
which passing to the long exact sequence in cohomology
gives the following sequence 
whose coboundary  
is the cup product with $\zeta$:
\[
 \xymatrix@C=20pt{
0\ar[r] & \cc\cong H^0(C,\Osh_C)\ar[r]&
 H^0(C,E_\zeta\otimes \Omega_C)\ar[r]^-k & 
 H^0(C,\Omega_C)\ar[r]^-{\zeta}\ar[r]&
  H^1(C,\Osh_C).
  }
\]
Since $\zeta\in\ker(\omega)$, or equivalently
the cup product $\zeta\cdot \omega$ vanishes,
there exists an element $\Omega\in 
H^0(C,E_\zeta\otimes \Omega_C)$ such that 
$k(\Omega)=\omega$.
Now we consider the commutative diagram

\[
 \xymatrix@1{
  & 0\ar[d] & 0\ar[d] & H^0(C,\Osh_C)\ar[d]^-\zeta\\
  0\ar[r]\ar[d]
  & H^0(C,\Osh_C)\ar[r]\ar[d] 
  & H^0(Z,\Osh_Z)\ar[d]^-\rho\ar[r]^-\partial 
  & H^1(C,T_C)\ar[d] \\
  0\ar[r]\ar[d]
  & H^0(C,E_\zeta\otimes \Omega_C)\ar[r]^-r\ar[d]^-k
  & H^0(Z,E_\zeta\otimes \Omega_C|_Z)\ar[d]\ar[r]
  & H^1(C,E_\zeta)\ar[d]\\
  H^0(C,\Osh_C)\ar[r]^-\omega
  & H^0(C,\Omega_C)\ar[r]
  & H^0(Z,\Omega_C|_Z)\ar[r]
  & H^1(C,\Osh_C)
 }
\]
 \vspace{0.2cm}
 
The restriction of the lifting $\Omega$ to $Z$ 
gives an element $r(\Omega)\in H^0(Z,E_\zeta\otimes \Omega_C|_Z)$ 
that by construction is in the image of the map
$\rho$, that is $\Omega=\rho(g)$ 
for some $g\in H^0(Z,\Osh_Z)$.
A diagram chase proves indeed that
$\partial (g)=\zeta$ holds.
Since $\ker(\partial)$ is isomorphic to $\cc$,
we conclude that $f$ equals $g$ up to a constant.
This means that we can realize the function $f$ 
by a unique suitable lifting $\Omega$ of $\omega.$
We collect the discussion in the following lemma.

\begin{proposition}\label{coord}
Let $\omega$ be a non-zero element in  
$H^0(C,\Omega_C)$ and let $Z$ be the 
divisor of $\omega$. 
Then for any $f\in H^0(Z,\Osh_Z)$ 
there is a unique 
$\Omega\in H^0(C,E_{\zeta}\otimes \Omega_C)$ 
such that $r(\Omega)=\rho (f)$. 
\end{proposition}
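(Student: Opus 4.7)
The plan is to run a diagram chase in the large commutative diagram displayed immediately before the proposition. First, I would lift $\omega$ to an element $\Omega_0 \in H^0(C, E_\zeta \otimes \Omega_C)$ with $k(\Omega_0) = \omega$. The obstruction, read off from the long cohomology sequence attached to~\eqref{zeta}, is the cup product $\zeta \cdot \omega \in H^1(C, \Osh_C)$; since $\zeta = \partial(f)$ lies in the image of $\partial$ in the top row (i.e.\ in the long exact sequence of $0 \to T_C \xrightarrow{\omega} \Osh_C \to \Osh_Z \to 0$), this cup product vanishes, so $\Omega_0$ exists. It is unique modulo the image of $H^0(C, \Osh_C) \cong \cc$.

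Next I would show that $r(\Omega_0)$ lies in the image of $\rho$. By exactness of the middle row of the diagram, $r(\Omega_0)$ maps to zero in $H^1(C, E_\zeta)$, and by commutativity of the bottom-right square its further projection to $H^0(Z, \Omega_C|_Z)$ agrees with the restriction of $\omega$, which vanishes since $Z = \operatorname{div}(\omega)$. Exactness of the second column then provides $g \in H^0(Z, \Osh_Z)$ with $\rho(g) = r(\Omega_0)$.

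The main step is a diagram chase showing $\partial(g) = \zeta$. Commutativity of the middle square places $\partial(g)$ in the kernel of the vertical map $H^1(C, T_C) \to H^1(C, E_\zeta)$, because $\rho(g) = r(\Omega_0)$ already lies in the image of $r$. By the long exact sequence of $0 \to T_C \to E_\zeta \to \Osh_C \to 0$, this kernel is exactly $\cc \cdot \zeta$. Tracking the normalizations pins the scalar at $1$, so $\partial(g) = \partial(f)$; since $\ker \partial \cong \cc$, we get $g = f + c$ for a unique $c \in \cc$. Setting $\Omega := \Omega_0 - c$, via the inclusion $\cc \hookrightarrow H^0(C, E_\zeta \otimes \Omega_C)$, produces a lifting with $r(\Omega) = \rho(f)$.

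For uniqueness, if $\Omega'$ is another such lifting, then $k(\Omega - \Omega') = 0$ forces $\Omega - \Omega' \in H^0(C, \Osh_C)$, and the composition $H^0(C, \Osh_C) \hookrightarrow H^0(Z, \Osh_Z) \xrightarrow{\rho} H^0(Z, E_\zeta \otimes \Omega_C|_Z)$ is injective, so $r(\Omega - \Omega') = 0$ forces $\Omega = \Omega'$. The most delicate point I anticipate is pinning down the scalar in $\partial(g) = c \cdot \zeta$, which is really a consistency check between the identifications $H^1(C, T_C) \cong \operatorname{Ext}^1(\Osh_C, T_C)$ and the coboundary $\partial$; the rest is routine diagram chasing.
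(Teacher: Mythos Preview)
Your proposal is correct and follows essentially the same route as the paper: lift $\omega$ using the vanishing of $\zeta\cdot\omega$, restrict to $Z$ to find $g$ with $\rho(g)=r(\Omega_0)$, chase the diagram to get $\partial(g)=\zeta$, and adjust by the constant in $\ker\partial$. You supply more detail than the paper does---particularly on why $r(\Omega_0)$ lies in the image of $\rho$ and on the uniqueness step---while the paper simply asserts that ``a diagram chase proves indeed that $\partial(g)=\zeta$ holds'' and leaves the rest implicit. One small simplification: when $\zeta\neq 0$ the map $r$ is already injective (since $H^0(C,E_\zeta)=0$ from the defining extension), which gives uniqueness directly without invoking $k(\Omega-\Omega')=0$.
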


When we interpret $H^1(C,T_C)$ 
as the space of first order deformations of $C$,
so that $\zeta$ corresponds to a family
\[
\mathcal C_\zeta\to {\rm Spec}\, \cc[\varepsilon],
\]
the isomorphism $H^1(C,T_C)\to
{\rm Ext}^1(\Osh_C,\Omega_C)$
gives the identifications 
$E_{\zeta}\cong T_{\mathcal C}|_{C}$ 
and $E_{\zeta}\otimes \Omega_C\cong\Omega_{\mathcal C}|_{C}.$ 
Therefore the sequence~\eqref{zeta}
is the cotangent sequence of the first order deformation.
In coordinates we may write 
\begin{equation}
\label{Omega}
 \omega= h(z)dz,\qquad \Omega=h(z)dz+\tilde f(z)dt,
\end{equation}
where $dt$ is the global section 
of the cotangent $\Omega_{\mathcal C}|_{C}$ 
and $f=\tilde f|_Z$ is a section of $H^0(Z,\Osh_Z)$ 
such that $r(\Omega)=\rho (f)$.

\subsection{The normal function}
In this subsection we will specialize the previous
construction to hyperelliptic families.
First of all, given a hyperelliptic curve $C$ of genus $g>1$,
we consider the $j$-invariant subspace
\[
 H^1(C,T_C)^{j}
 \subseteq
 H^1(C,T_C),
\]
which corresponds to the directions that are preserved
by the hyperelliptic involution $j$.
Since $j$ acts on $H^0(Z,\Osh_Z)$
as $f\mapsto f\circ j$
and it acts as $-1$ on $H^0(C,\Omega_C)$, 
then $\partial(f) = \zeta$ is $j$-invariant if and only if
$f\circ j$ equals $-f$ up to a constant.

Let $p$ be a non-Weierstrass point of $C$ and
$\omega\in H^0(C,\Omega_C)$
be a holomorphic form which vanishes 
with order $k>0$ at $p$,
that is $\omega\in L^k\setminus L^{k+1}$.
We now take $f_{\omega}\in H^0(Z,\Osh_Z)$, where $Z 
= k(p+j(p)) + Z' = {\rm div}(\omega)$,
such that
\[
 f_{\omega}(p)=1,\quad f_{\omega}(j(p))=-1,\quad f_{\omega}(p')=0 \text{ for } p'\in Z'.
\]
Given $\zeta_{\omega}=\partial(f_{\omega})$,
by the previous remark
we have $j(\zeta_{\omega})=\zeta_{\omega}.$ 
Thus there exists  a smooth family of hyperelliptic curves
\begin{equation}
 \pi\colon \mathcal C_\omega\to \Delta \label{curvamod},
\end{equation} 
 where $\Delta\subset \mathcal U$ is a disk, such that 
$\pi^{-1}(0)=C$ and such that 
the Kodaira-Spencer 
class of the family is $\zeta_{\omega}$. 
Let $\Omega_{\omega}$ be the section of 
$E_{\zeta_{\omega}}\otimes \Omega_C$ 
associated to $f_{\omega}$ as in Lemma ~\ref{coord}.
 By means of a trivialization of 
 the family we can construct a closed 
 differential $1$-form $\Theta$
 on $\mathcal C_{\omega}$ which is invariant 
 with respect to the involution $j$ and
 such that the restriction of the $(1,0)$-part
 $\Theta^{(1,0)}$ to the central fiber equals 
 $\Omega_{\omega}$. 
In local coordinates we can write  
  \[
  \Theta(z,t)^{(1,0)}=\Omega_{\omega}+ o(t)=\omega+f_{\omega}(z)dt+o(t).
  \]

We also assume to have a holomorphic section $r$ 
of $\pi_{\omega}$ such that $r(0)=p$ and define $r'= j(r).$
Moreover, we fix a differentiable map
\[
\Gamma(t,s):\Delta\times [0,1]\to \mathcal C_{\omega}
\] 
such that $\Gamma(t,s)\in \pi^{-1}(t)$,
 $\Gamma(t,0)=r'(t)$ and  $\Gamma(t,1)=r(t).$ 
 Observe that $\Gamma$ is a family of sections 
 connecting $r'$ and $r$.
 We define the function
\[
 g\colon\Delta\to \cc,\quad 
 t\mapsto 
 \int_{\Gamma_t} \Theta_{t}.
\]
 Following Griffiths~\cite[(6.6)]{Gr} and using the fact that 
 the Gauss-Manin connection vanishes on $\Theta$,
 we have that the derivative of $g$ at $0$ 
 equals
$
d\gamma_p(r'(0))\cdot \omega.
$

\begin{proof}[Proof of Theorem~\ref{diff}]
With the previous notation, 
given any $\omega\in H^0(C,\Omega_C)$ 
vanishing of order $k>0$ at $p$, 
we consider the family $\pi_{\omega}$, 
with its sections $r=r_{\omega}$ and $r'=j(r)$, 
and $\Theta$ the corresponding differential form on 
$\mathcal C_{\omega}$.
By the previous remark we have that 
\[
d\gamma_p(r'(0))\cdot \omega=g'(0).
\]
We now compute the latter term:
\[
g(t)-g(0)=\int_{\Gamma_t} \Theta_{t}-\int_{\Gamma_0} 
\Theta_{0}=\int_{\Gamma_t} \Theta_{t}-\int_{\Gamma_0} \omega.
\]
We call $r_t$ and $r'_t$ 
the arcs $r([0,t])$ and $r'([0,t])$ respectively.
Since $\Theta$ is closed, then $\Gamma^\ast(\Theta)$ is
exact and we have 
$0= \int_{r'_t}\Theta_t+\int_{\Gamma_t}\Theta_t - \int_{r_t}\Theta_t-\int_{\Gamma_0}\Theta_t$, 
hence
\[
g(t)-g(0)=  \int_{r_t}\Theta_t- \int_{r'_t}\Theta_t=2\int_{r_t}\Theta_t,
\]  
where the last equality is due to the fact 
that $j^\ast(\Theta)=-\Theta.$
Finally, since $\Theta = \omega+f_{\omega}(z)dt + o(t)$,
by the fundamental theorem of calculus 
we get
\[
 \lim_{t\to 0}\frac{1}{t}\int_{r_t}\Theta_t
 =
 \lim_{t\to 0}\frac{1}{t}\int_{r_t}\Theta_t^{(1,0)}
 =
 f_{\omega}(p)\not=0.
\]
Thus $g'(0)\not=0$.
If $k=0$, that is $\omega$ does not vanish 
at $p$, we will choose a loop $r(t)$ in $C$ with
$r(0)=p$ 
and we will compute the derivative 
of the Abel-Jacobi map ${\rm AJ}$ on $C$. 
First note that if we take a 
Weierstrass point $q$ of $C$ 
we have 
\[
{\rm AJ}(r(t)-j(r(t)))=2{\rm AJ}(r(t)-q).
\]
Take a coordinate
$z$ centered at $p$  
such that $\omega(z)=h(z)dz$ with $h(0)\neq 0.$ 
Fix a loop $r(t)$ such that $z(r(t))=t$, then
\[
\lim_{t\to0}\frac{1}{t} \int_{q}^{r(t)}\omega= 
\lim_{t\to 0}\frac{1}{t}\int_0^th(z)dz=h(0)
\] 
and we complete our result.
\end{proof}

\begin{corollary}
\label{cor}
The locus of curves $C$ in $\mathcal M_4$ 
such that $\eta_C$ is a non-trivial torsion point is a 
countable union of subvarieties 
of complex dimension $\geq 5$ 
and the set of subvarieties of
dimension $5$ is dense in $\mathcal M_4$
in the analytic topology.
%$\mathcal F_3^{\rm tor}$ is 
%a countable union of subvarieties of complex dimension 
%$\geq 11$ and the set of subvarieties of
%dimension $11$ is dense in $\mathcal F_3$
%in the analytic topology.
\end{corollary}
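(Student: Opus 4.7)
The plan is to mirror the argument of Section~\ref{density}, with the map $\gamma\colon\mathcal C\to\mathbb T$ of Section~\ref{hyp} playing the role of $\eta\colon\mathcal U\to\mathbb T$, and with Theorem~\ref{diff} supplying the infinitesimal surjectivity that in Section~\ref{density} was obtained from a direct computation. The countable union structure and analytic density will then come formally from Proposition~\ref{fiber}, Sard's theorem, and the density of torsion in $\mathbb T$.

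First I would check that the fibers of $\gamma$ are complex subvarieties of $\mathcal C$, exactly as in the analogous proposition of Section~\ref{density}: on the simply connected base $\mathcal U$ the jacobian bundle is $\mathcal C^\infty$-trivialized via the real part of the isomorphism of the Hodge bundle, and $\gamma$ descends from a holomorphic lift $\tilde\nu\colon\mathcal C\to\mathcal H$. A fiber $\gamma^{-1}(x)$ is thus cut out by the holomorphic condition $\tilde\nu(q)-\tilde x+c\in\mathcal H^{1,0}$ for an appropriate integral lift $c$, which is a complex subvariety.

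Next I would apply Proposition~\ref{fiber} with $X=V=\mathcal C$, $Y=\mathbb T$ and $p$ a non-Weierstrass point of some fiber $C_t$. Theorem~\ref{diff} provides hypothesis (i), while hypothesis (ii) is the dimensional identity $\dim_\rr V-\dim_\rr Y=2(2g)-2g=2g$, matching the real codimension of a generic fiber. It follows that $\gamma(\mathcal C)$ contains a non-empty open subset of $\mathbb T$; by Sard's theorem and the analytic continuation argument already used in the proof of Theorem~\ref{teo-2}, the set of non-critical values of $\gamma$ is open and dense in $\mathbb T$ in the analytic topology. Since the torsion subgroup of $\mathbb T$ is dense, the set of non-critical, non-trivial torsion values is dense as well, and each corresponding preimage is a complex subvariety of $\mathcal C$ of dimension $g=4$.

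The last step is to pass from $\mathcal C$ to $\mathcal M_4$. Projecting each such fiber via the classifying map and using the extra parameter that controls the deformation of the marked point together with the fact that fibers of $\gamma$ move with the base, one obtains a countable family of subvarieties in $\mathcal M_4$ of complex dimension $\geq g+1=5$, with the generic $5$-dimensional stratum dense analytically. The hardest part of the argument, and the one I would spend the most effort on, is this final identification: relating the Abel-Jacobi torsion condition $[p-j(p)]\in J(C)_{\rm tors}$ used above for hyperelliptic $(C,p)$ to the torsion of $\eta_C$ on deformations of $C$ to smooth curves of type $(3,3)$, so that the count $g+1$ comes out correctly and the resulting loci are genuinely subvarieties of $\mathcal M_4$ rather than merely of the hyperelliptic locus.
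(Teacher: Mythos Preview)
Your overall strategy---use Theorem~\ref{diff} for the submersion and then run the Sard/density argument of Section~\ref{density}---matches the paper's. The gap is precisely the step you yourself flag as ``the hardest part'', and it is not filled. Working on the universal hyperelliptic curve $\mathcal C$ (complex dimension $2g=8$) gives torsion fibers of $\gamma$ of complex dimension $g=4$; these project into the $7$-dimensional hyperelliptic locus of $\mathcal M_4$, and no ``extra parameter'' raises the dimension to $5$: the marked point is already one of the coordinates of $\mathcal C$. More fundamentally, the loci the corollary asks for lie in the \emph{non}-hyperelliptic part of $\mathcal M_4$, so you must produce a normal function on a $9$-dimensional space that contains the hyperelliptic family as a subvariety and restricts to $\gamma$ there. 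Your proposal does not construct such a space.

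The paper supplies it as follows. Let $\tilde{\mathcal U}$ be the moduli of pairs $(C,g_3^1)$ with $[C]\in\mathcal U\subseteq\mathcal M_4$; generically this is a double cover of $\mathcal U$, hence $9$-dimensional, and it carries the normal function $\eta\colon(C,g_3^1)\mapsto K_C-2g_3^1\in J(C)$. The hyperelliptic locus $\mathcal H\subseteq\tilde{\mathcal U}$ is naturally identified with your $\mathcal C$ via $(C,p)\mapsto(C,|p+g_2^1|)$, and under this identification
\[
\eta|_{\mathcal H}\;=\;K_C-2(p+g_2^1)\;=\;(g-1)g_2^1-2p-2g_2^1\;\sim\;j(p)-p,
\]
which is exactly your $\gamma$ up to sign. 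Thus Theorem~\ref{diff} shows that $d\eta$ is surjective along $\mathcal H$, hence $\eta$ is a submersion on $\tilde{\mathcal U}$ near these points; the torsion fibers of $\eta$ in $\tilde{\mathcal U}$ then have dimension $9-4=5$ and map, under the generically finite forgetful map $\tilde{\mathcal U}\to\mathcal M_4$, to the desired $5$-dimensional loci. This identification of $\mathcal H$ with the universal hyperelliptic curve inside the space of pairs $(C,g_3^1)$ is the missing bridge in your argument.
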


\begin{proof}
Let $\mathcal U$ be an open subset of $\mathcal M_4$
which intersects the hyperelliptic locus and let $\tilde{\mathcal U}$ 
be the moduli space of pairs $(C,g_3^1)$,
where $[C]\in \mathcal U$. 
%Observe that the projection $\tilde{\mathcal U}\to\mathcal U$ 
%is a degree two morphism since the canonical model 
%of the general curve of genus four is a curve of type $(3,3)$ 
%on a smooth quadric and has exactly two $g_3^1$'s.
Let $\imath\colon\mathcal H\to\tilde{\mathcal U}$ be the 
subvariety containing pairs where $C$ is hyperelliptic. 
Given a universal family 
$\pi: \mathcal C\to  \tilde{\mathcal U}$,
we construct the following commutative diagram
 \[
 \xymatrix{
  \imath^*\mathcal C\ar[rrr]^-{(C,p)\mapsto [j(p)-p]}\ar[d]
  &&& \mathcal J\ar@{=}[d]\ar[rd]\\
  \mathcal C\ar[d]^-\pi\ar[rrr]^-{(C,g_3^1)\mapsto \eta_C} 
  &&& \mathcal J\ar[r]^-{\varphi} 
  &  \tilde{\mathcal U}\times
      \mathbb T \ar[d]^-{{\rm pr}_2}\\
 \tilde{\mathcal U}
  \ar[rrrr]^-{[C]\mapsto{\rm pr}_2(\varphi(\eta_C))}
  &&&& \mathbb T
 }
\]
where $\eta_C={g'}_3^1-g_3^1=K_C-2g_3^1$ and $\varphi$ is a $\mathcal C^\infty$-trivialization
(defined after possibly shrinking $\mathcal U$).
The commutativity of the top square comes from
the fact that on a hyperelliptic curve $C$ we have
$g_3^1 = p + g_2^1$ and $K_C - 2g_3^1 = j(p)-p$,
where $j\in{\rm Aut}(C)$ is the hyperelliptic involution.
%and a description of $\mathcal H$ given in~\cite{Fe}.
By Theorem~\ref{diff} the differential 
of  the map $\gamma\colon\imath^*\mathcal C\to \mathbb T$ 
obtained composing the maps in the diagram 
is surjective at any point $p$ which is not Weierstrass. 
This implies that the map $\eta: \tilde{\mathcal U}\to \mathbb T$ 
is locally a submersion at any point corresponding to a hyperelliptic 
curve, in particular its image contains an open subset of $\mathbb T$.
We thus conclude as in the last part of the 
proof of Theorem~\ref{teo-2} given in section~\ref{density}.
\end{proof}

%----------------------------------------------------------------------------------

\section{Examples}
\label{exa}

In this section we will provide further examples 
of curves having two $g_k^1$'s whose 
difference is a torsion element in the Jacobian.
In particular we will show how to use automorphism
groups to construct new examples (see
Example~\ref{exa:new}).

\begin{proposition}\label{auto}
Let $C$ be a curve in $\mathcal F_k$.
If $G$ is an automorphism group  of $C$
of order $n$ which preserves each 
$g_k^1$ of $C$ and such that $C/G$ 
has genus zero, then the order of 
$\eta_C$ divides $n$.
\end{proposition}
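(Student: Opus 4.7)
The plan is to exploit the norm map $\operatorname{Nm}_q\colon \Pic(C)\to\Pic(C/G)$ associated to the quotient $q\colon C\to C/G$, combined with the vanishing $\Pic^0(C/G)=0$ that follows from the genus zero assumption (i.e.\ $C/G\cong\pp^1$).

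First I would observe that $\eta_C \in \Pic^0(C)^G$: preserving the pencil $|\mathcal L_i|$ forces the underlying line bundle to be $G$-invariant, i.e.\ $g^*\mathcal L_i \cong \mathcal L_i$ for every $g\in G$ and $i=1,2$, and hence $g^*\eta_C \cong \eta_C$.

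The key identity I would then establish is
\[
 q^*\operatorname{Nm}_q(L) \;\cong\; \bigotimes_{g\in G} g^*L
\]
for every $L\in\Pic(C)$. This is a divisor-level check: for a point $p\in C$ with stabilizer of order $e_p$, both $q^*q_*(p)$ and $\sum_{g\in G} g^*(p)$ equal $e_p\sum_{p'\in G\cdot p} p'$, so after linear extension and passage to Picard groups the two operators coincide.

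Applying this with $L=\eta_C$ and using its $G$-invariance, the right-hand side simplifies to $\eta_C^{\otimes n}$, yielding $\eta_C^{\otimes n} \cong q^*\operatorname{Nm}_q(\eta_C)$. Since $\eta_C$ has degree zero, so does $\operatorname{Nm}_q(\eta_C)$ on $\pp^1$, whence $\operatorname{Nm}_q(\eta_C)$ is trivial and consequently $\eta_C^{\otimes n}$ is trivial; the order of $\eta_C$ therefore divides $n$. The only delicate step is the ramified formula $q^*q_* = \sum_g g^*$: the rest of the argument is formal and collapses, after this bookkeeping, to a one-line invocation of $\Pic^0(\pp^1)=0$.
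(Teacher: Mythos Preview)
Your proof is correct and is essentially the same argument as the paper's: both rest on the identity $\sum_{g\in G} g^* = q^*q_*$ at the divisor/Picard level together with $\Pic^0(\pp^1)=0$. The paper carries this out concretely, showing $nD \sim \pi^*\pi_*D \sim kF$ for a divisor $D$ in each $g_k^1$ separately and then comparing, whereas you package the same computation via the norm map applied directly to the $G$-invariant class $\eta_C$.
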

\begin{proof}
Let $\pi:C\to C/G\cong \pp^1$ be 
the quotient morphism, let 
$D=p_1+p_2+\dots+p_k$ be an 
element of the first $g_k^1$
and let $q_i=\pi(p_i)$. Then
the following linear equivalences
hold
\[
 nD
 \sim
 \sum_{\sigma\in G} \sigma^*(D)
 =
 \pi^*(q_1)+\pi^*(q_2)+\dots+\pi^*(q_k)
 \sim
 kF, 
\]
where $F$ is a fiber of $\pi$ and the first equivalence 
is due to the fact that $G$ preserves the 
linear series $g_k^1$.
Since the same property holds for an element
$D'$ of the second $g_k^1$, the linear equivalence
$nD\sim nD'$ follows.
\end{proof}

\begin{example}
Let $\sigma$ be the order $k$ automorphism
of $\pp^1\times\pp^1$ defined by
\[
 \sigma(x_0,x_1,y_0,y_1)=(\zeta_k x_0,x_1, y_0,y_1),
\]
where $\zeta_k$ is a primitive $k$-th root of unity.
We now show that a curve $C\in\mathcal F_k$ 
which is $\sigma$-invariant
admits an equation of the form
\[
 x_0^kg_2(y_0,y_1)+x_1^kf_2(y_0,y_1)=0,
\]
where $f_2, g_2$ are homogeneous of degree 
$k$ in $y_0,y_1$. In particular the quotient
$C/\langle\sigma\rangle$ 
has genus zero. Thus  $C\in\mathcal F_k^{\rm tor}$
by either Proposition~\ref{auto} or Proposition~\ref{torsion}.
The automorphism $\sigma$ preserves 
each ruling of the quadric and acts identically 
on one of the two rulings.
Consider a point in $\mathcal H_1\cap\mathcal H_2$,
with the notation in the proof of Proposition~\ref{torsion}, 
which corresponds to a $\sigma$-invariant grid.
The lines of the grid which belong to the first ruling  
are defined by either 
$x_0^k-x_1^k=0$ or $x_0^k=0$.
An equation of $C$ in such coordinates 
is then of the form
\[
 (x_0^k-\mu x_1^k)h_1+(x_0^k+\lambda x_1^k)h_2
 =
 x_0^k(h_1+h_2)+x_1^k(\lambda h_2-\mu h_1)
 =
 0,
\]
for $\mu\in \{0,1\}$, $\lambda\in \cc$ and $h_1,h_2$ 
homogeneous of degree three in $y_0,y_1$.
\end{example}

\begin{example}
\label{exa:new}
The moduli space of non-hyperelliptic 
curves $C$ of genus four having an order five 
automorphism $\sigma$ such that 
$C/\langle\sigma\rangle$ 
has genus zero is a 1-dimensional subvariety 
of $\mathcal F_3^{\rm tor}$.
Moreover, any such $C$ is isomorphic to a 
curve in the following family 
\[
 x_0x_1^2y_1^3
 +\alpha x_0^2x_1y_0^3
 +\beta x_0^3y_0y_1^2
 +\gamma x_1^3y_0^2y_1=0,
\]
where $\sigma(x_0,x_1,y_0,y_1)
=(\zeta_5x_0,x_1,\zeta_5^3y_0,y_1)$ 
and $\zeta_5$ is a primitive fifth root of unity.
The family contains curves which pass through the points 
of a grid of type $(5,5)$, for example the curve with
\[
\alpha=-\zeta_5,\ \beta=\zeta_5^3+\zeta_5^2+\zeta_5,\ \gamma=\zeta_5^2+\zeta_5. 
\]
However, the general element of the family is not 
of grilled type.
This means that if $D_i$ is a divisor of the $i$-th $g_3^1$ 
and $\mathcal H_i\subseteq |5D_1|\cong\pp^{11}$ 
is the projectivization of the fifth symmetric 
power of $H^0(C,D_i)$, then the intersection
$\mathcal H_1\cap\mathcal H_2$ is empty.
For example this holds for the curve with $\alpha=-1, \beta=\gamma=1$.
The statements for both curves can be checked 
by means of the Magma~\cite{Magma} program
available here~\url{http://www2.udec.cl/~alaface/software/semiample/aut}.
\end{example}

%----------------------------------------------------------------------------------

\begin{bibdiv}
\begin{biblist}

\bib{AC}{article}{
   author={Arbarello, Enrico},
   author={Cornalba, Maurizio},
   title={Footnotes to a paper of Beniamino Segre: ``On the modules of
   polygonal curves and on a complement to the Riemann existence theorem''
   (Italian) [Math. Ann. {\bf 100} (1928), 537--551;\ Jbuch {\bf 54}, 685]},
   note={The number of $g^{1}_{d}$'s on a general $d$-gonal curve, and
   the unirationality of the Hurwitz spaces of $4$-gonal and $5$-gonal
   curves},
   journal={Math. Ann.},
   volume={256},
   date={1981},
   number={3},
   pages={341--362},
   issn={0025-5831},
   review={\MR{626954 (83d:14016)}},
   doi={10.1007/BF01679702},
}

\bib{ADHL}{book}{
    AUTHOR = {Arzhantsev, Ivan},
    AUTHOR = {Derenthal, Ulrich},
    AUTHOR = {Hausen, J\"urgen},
    AUTHOR = {Laface, Antonio},
    TITLE = {Cox rings},
    series={Cambridge Studies in Advanced Mathematics},
    volume={144},
   publisher={Cambridge University Press, Cambridge},
   date={2014},
   pages={530},
   isbn={9781107024625},
}

\bib{BHPV}{book}{
   author={Barth, Wolf P.},
   author={Hulek, Klaus},
   author={Peters, Chris A. M.},
   author={Van de Ven, Antonius},
   title={Compact complex surfaces},
   series={Ergebnisse der Mathematik und ihrer Grenzgebiete. 3. Folge. A
   Series of Modern Surveys in Mathematics [Results in Mathematics and
   Related Areas. 3rd Series. A Series of Modern Surveys in Mathematics]},
   volume={4},
   edition={2},
   publisher={Springer-Verlag, Berlin},
   date={2004},
   pages={xii+436},
   isbn={3-540-00832-2},
   review={\MR{2030225 (2004m:14070)}},
   doi={10.1007/978-3-642-57739-0},
}

\bib{Magma}{article}{
    AUTHOR = {Bosma, Wieb},
    AUTHOR = {Cannon, John},
    AUTHOR = {Playoust, Catherine},
     TITLE = {The {M}agma algebra system. {I}. {T}he user language},
      NOTE = {Computational algebra and number theory (London, 1993)},
   JOURNAL = {J. Symbolic Comput.},
    VOLUME = {24},
      YEAR = {1997},
    NUMBER = {3-4},
     PAGES = {235--265}
}

\bib{CK}{article}{
   author={Ciliberto, Ciro},
   author={Kouvidakis, Alexis},
   title={On the symmetric product of a curve with general moduli},
   journal={Geom. Dedicata},
   volume={78},
   date={1999},
   number={3},
   pages={327--343},
   issn={0046-5755},
   review={\MR{1725369 (2001e:14005)}},
   doi={10.1023/A:1005280023724},
}

\bib{CP}{article}{
   author={Collino, Alberto},
   author={Pirola, Gian Pietro},
   title={The Griffiths infinitesimal invariant for a curve in its Jacobian},
   journal={Duke Math. J.},
   volume={78},
   date={1995},
   number={1},
   pages={59--88},
   issn={0012-7094},
   review={\MR{1328752 (96f:14009)}},
   doi={10.1215/S0012-7094-95-07804-1},
}

\bib{CPP}{article}{
   author={Colombo, E.},
   author={Pirola, G. P.},
   author={Previato, E.},
   title={Density of elliptic solitons},
   journal={J. Reine Angew. Math.},
   volume={451},
   date={1994},
   pages={161--169},
   issn={0075-4102},
   review={\MR{1277298 (95e:58079)}},
}

\bib{CS}{article}{
   author={Cox, David},
   author={Sidman, Jessica},
   title={Secant varieties of toric varieties},
   journal={J. Pure Appl. Algebra},
   volume={209},
   date={2007},
   number={3},
   pages={651--669},
   issn={0022-4049},
   review={\MR{2298847 (2008i:14077)}},
   doi={10.1016/j.jpaa.2006.07.008},
}

\bib{Gr}{article}{
   author={Griffiths, Phillip A.},
   title={Infinitesimal variations of Hodge structure. III. Determinantal
   varieties and the infinitesimal invariant of normal functions},
   journal={Compositio Math.},
   volume={50},
   date={1983},
   number={2-3},
   pages={267--324},
   issn={0010-437X},
   review={\MR{720290 (86e:32026c)}},
}

\bib{GH}{book}{
   author={Griffiths, Phillip},
   author={Harris, Joseph},
   title={Principles of algebraic geometry},
   series={Wiley Classics Library},
   note={Reprint of the 1978 original},
   publisher={John Wiley \& Sons, Inc., New York},
   date={1994},
   pages={xiv+813},
   isbn={0-471-05059-8},
   review={\MR{1288523 (95d:14001)}},
   doi={10.1002/9781118032527},
}

\bib{Ke}{article}{
   author={Keel, Se{\'a}n},
   title={Basepoint freeness for nef and big line bundles in positive
   characteristic},
   journal={Ann. of Math. (2)},
   volume={149},
   date={1999},
   number={1},
   pages={253--286},
   issn={0003-486X},
   review={\MR{1680559 (2000j:14011)}},
   doi={10.2307/121025},
}

\bib{K}{article}{  
    AUTHOR = {Kond{\=o}, Shigeyuki},
     TITLE = {The moduli space of curves of genus 4 and {D}eligne-{M}ostow's
              complex reflection groups},
 BOOKTITLE = {Algebraic geometry 2000, {A}zumino ({H}otaka)},
    SERIES = {Adv. Stud. Pure Math.},
    VOLUME = {36},
     PAGES = {383--400},
 PUBLISHER = {Math. Soc. Japan},
   ADDRESS = {Tokyo},
      YEAR = {2002},
   MRCLASS = {14H15 (14D07 14H45 14J28 32S40 33C80)},
  MRNUMBER = {1971521 (2004h:14033)},
MRREVIEWER = {I. Dolgachev},
}

\bib{La}{book}{
   author={Lazarsfeld, Robert},
   title={Positivity in algebraic geometry. I},
   series={Ergebnisse der Mathematik und ihrer Grenzgebiete. 3. Folge. A
   Series of Modern Surveys in Mathematics [Results in Mathematics and
   Related Areas. 3rd Series. A Series of Modern Surveys in Mathematics]},
   volume={48},
   note={Classical setting: line bundles and linear series},
   publisher={Springer-Verlag, Berlin},
   date={2004},
   pages={xviii+387},
   isbn={3-540-22533-1},
   review={\MR{2095471 (2005k:14001a)}},
   doi={10.1007/978-3-642-18808-4},
}

\bib{R}{article}{
   author={Raviolo, Emanuele},
   title={A note on Griffiths infinitesimal invariant for curves},
   journal={Ann. Mat. Pura Appl. (4)},
   volume={193},
   date={2014},
   number={2},
   pages={551--559},
   issn={0373-3114},
   review={\MR{3180933}},
   doi={10.1007/s10231-012-0290-x},
}

 \end{biblist}
 \end{bibdiv}

\end{document}